\DeclareMathOperator*{\argmin}{argmin}
\newtheorem{theorem}{Theorem}
\newtheorem{lemma}{Lemma}
\newtheorem{assumption}{Assumption}
\newcommand{\Var}{\mathrm{Var}}
\title{\vspace{-2cm} Team Variance Optimization of $n$-Player Stochastic Games with Separately Controlled Chains}
\author{Li Xia\thanks{Li Xia is with the School of Business and the Guangdong Province Key Laboratory of Computational Science, Sun Yat-Sen University, Guangzhou 510275, China. (email: xiali5@sysu.edu.cn).}}
\date{}
\begin{document}
\maketitle

\begin{abstract}
In this paper, we study a subclass of $n$-player stochastic games,
in which each player has their own internal state controlled only by
their own action and their objective is a common goal called team
variance which measures the total variation of the random rewards of
all players. It is assumed that players cannot observe each others'
state/action. Thus, players' internal chains are controlled
separately by their own action and they are coupled through the
objective of team variance. Since the variance metric is not
additive or Markovian, the dynamic programming principle fails in
this problem. We study this problem from the viewpoint of
sensitivity-based optimization. A difference formula and a
derivative formula for team variance with respect to policy
perturbations are derived, which provide sensitivity information to
guide decentralized optimization. The existence of a stationary pure
Nash equilibrium policy is derived. We further develop a bilevel
optimization algorithm that iteratively updates the team mean at the
outer level and minimizes the team variance at the inner level,
where the team mean serves as a signal to coordinate the
optimization of $n$ players in a decentralized manner. We prove that
the algorithm can converge to a strictly local minimum or a
first-order stationary point in the space of mixed policies.
Finally, we demonstrate the effectiveness of our approach using a
numerical experiment of energy management in smart grid, where the
assumption of separately controlled chains holds naturally.

%

\end{abstract}

\textbf{Keywords}: Markov decision process, stochastic game, team
variance, sensitivity-based optimization, decentralized optimization

\section{Introduction}\label{section_intro}

Variance is a widely adopted metric to measure the deviation of a
random variable. It can be used as an optimization objective when
the stability, safety, or risk of systems is concerned. For Markov
decision processes (MDPs) or reinforcement learning, the system
rewards are a series of random variables controlled by policies and
the classical optimization objective is the expectation of these
random rewards. However, it is reasonable to consider other metrics
of random rewards such that the policy optimization is superior,
which belongs to the framework of risk-sensitive MDPs or even
distributional reinforcement learning \citep{Bellemare23}. Such
framework of risk-sensitive decision-making is particularly useful
for applications where uncertainty and variability are critical,
such as finance, supply chain management, and robotics. Some widely
adopted risk measures in risk-sensitive MDPs include variance
\citep{Sobel82,Xia20}, exponential utility function
\citep{Howard1972,Borkar02,Cavazos-Cadena23,Blancas-Rivera24},
value-at-risk (VaR) \citep{Li2022}, conditional value-at-risk (CVaR)
\citep{Bauerle2011,Xia2023}, probability \citep{White93,Wen24},
prospect theory \citep{Etesami18,Wu24}, etc. When variance is
adopted as a risk metric, it is referred to as the mean-variance
optimization in MDPs, which suffers from the failure of dynamic
programming principle caused by the non-additivity or non-separable
property of variance metrics.

Some pioneering works about variance-related MDPs refer to
\cite{Sobel82} and \cite{Filar89}. There are a series of relevant
works along this research stream where the variance-constrained or
variance-penalized mean metrics of MDPs are studied with the aid of
mathematical programming \citep{White92,Chung94,Sobel94,Borkar24}.
Another research stream is directly studying the variance-related
criterion of MDPs in a restricted policy set where the mean metric
already reaches its optimum value
\citep{Hernandez99,Guo09b,Huang18,Xia18a}. Reinforcement learning is
also a research stream to study the variance-related risk measure in
stochastic dynamic systems \citep{Tamar12,Prashanth13,Bisi20}, where
policy gradient-based algorithms are usually employed to do
optimization in a data-driven manner. Furthermore, in financial
engineering, there also exists a stream of research efforts that
study the mean-variance optimization for multi-period portfolio
selection problems \citep{Li00,Zhou00,Zhou04}, where an embedding
method is proposed to particularly solve this portfolio model in the
framework of stochastic control. Nevertheless, the problem of
variance optimization of stochastic dynamic systems is still
challenging, especially on the issues of algorithmic study and
efficient computation of optimal policies.

The aforementioned literature focuses on the optimization of
variance-related risk metrics in the scenario of a single
decision-maker. It is a natural extension to consider risk metrics
in the scenario of multiple decision-makers, which is called
risk-sensitive stochastic games in mathematics or multi-agent
reinforcement learning in computer science. However, stochastic
games and multi-agent reinforcement learning have significant
challenges caused by the coupling behaviors of joint actions of
multiple decision-makers, which makes the decision-making
environment not stationary for each individual decision-maker. Local
information observation is another significant challenge, since it
widely exists that each decision-maker only observes its own local
information and has no willingness or communication capability to
share their information.

Constrained by these challenges, most of the existing literature in
stochastic games and multi-agent reinforcement learning focuses on
the risk-neutral objectives, i.e., the expectation of random
rewards. The studies of multi-agent reinforcement learning
concentrate on the sample efficiency and learning capability to find
$\epsilon$-approximate Nash equilibrium (NE) policies, and can be
categorized into cooperative algorithms (such as team games or
potential games) and noncooperative algorithms (such as zero-sum or
general-sum games). Since the remarkable success of AlphaGo, a
substantial and growing body of literature on multi-agent
reinforcement learning has emerged. \cite{Yongacoglu21} present a
two-timescale Q-learning algorithm that achieves team optimality in
a stochastic team game under full state observability but no action
sharing, and the algorithm's provable convergence is also provided.
\cite{Yan24} provably find lower confidence bounds and sample
complexity results for a model-based reinforcement learning
algorithm that can find an $\epsilon$-approximate NE of a two-player
zero-sum Markov game. For a comprehensive understanding on the
theories and algorithms of multi-agent reinforcement learning,
audience can refer to a recent survey by \cite{Zhang21}. For
stochastic games, more attention has been paid to the analysis of
mathematical models and the existence of equilibrium policies.
Considering the aforementioned challenges in stochastic games, some
special structures are usually assumed to leverage the rigorous
analysis and proofs. For example, \cite{Etesami24} aims to find
stationary NE policies in a local information observable $n$-player
stochastic game, with assumption that players' internal chains are
driven by independent transition probabilities. The author develops
polynomial-time learning algorithms based on dual averaging and dual
mirror descent, and demonstrates the effectiveness of finding an
$\epsilon$-NE policy using numerical experiments for energy
management in smart grid.

In contrast to the risk-neutral objectives, there exists much less
literature studying the risk-sensitive stochastic games with
variance-related risk measures, where players seek to maximize
expected rewards (mean) while simultaneously minimizing variability
(variance) in outcomes. The fundamental difficulties include the
non-stationarity and the local information observability caused by
multiple players, and the failure of dynamic programming caused by
variance-related metrics. \cite{Parilina21} study a cooperative
stochastic game with mean-variance payoff functions and define
characteristic function using a maxmin approach to determine
imputations. \cite{Escobedo-Trujillo24} study the variance
optimality in a two-player stochastic differential game and derive
sufficient conditions for the existence of relaxed strategies that
optimize the limiting variance of a constrained performance index of
each player in the continuous-time framework. 
There are some other works studying the mean-variance criterion in
the model of stochastic differential game for insurance problems
\citep{Li22}. However, these aforementioned works always assume the
full state observability and the algorithmic solutions for finding
NE policies are absent. To the best of our knowledge, there does not
exist literature work that studies the variance optimization
criterion in $n$-player stochastic games with local information
observability.

In this paper, we focus on the team variance optimization of
$n$-player stochastic games with separately controlled chains, where
the metric of team variance is defined to measure the deviation of
$n$ players' random rewards in steady state. The concept of team
variance is similar to to the group variance that is adopted in a
statistical tool called analysis of variance
\citep{Fisher25,Sawyer09}. The team variance can be partitioned into
the squared deviations of the rewards from their respective player
means (within-player variance) plus the squared deviations of the
player means from the overall mean (between-player variance), which
reflect the stability or risk within players and the evenness or
fairness between players, respectively. Our objective is to minimize
the team variance of the whole system in a decentralized manner,
where each player makes decision only based on their own local
information (individual state, action, and reward). We assume that
the internal state of each player is separately controlled by their
own action, not affected by other players' actions. This assumption
is reasonable in many practical applications, such as the energy
management problem in our experiment, and similar assumption is also
adopted in other literature work \citep{Etesami24}. Since dynamic
programming fails for the non-additivity of variance metrics, we
study this problem from a new perspective called sensitivity-based
optimization \citep{Cao07}, which focuses on the performance
sensitivity when the system policy or parameters have perturbations.
For this stochastic game, we derive the so-called team variance
difference formula and derivative formula, which quantify the change
of team variances when players' policies have changes. Based on
these formulas, we prove that the minimum variance can be attained
by stationary deterministic policies, i.e., a stationary pure Nash
equilibrium always exists for this stochastic game. We further
convert this problem to a bilevel optimization problem where the
inner level is $n$ independent optimization problems executed by
each player and the outer level is the optimization of team mean
which can be viewed as a signal to coordinate the inner optimization
of $n$ players. We develop a decentralized optimization algorithm
and prove its convergence to a strictly local minimum or a
first-order stationary point in the space of mixed policies.
Finally, we use a numerical experiment of energy management in smart
grid to demonstrate the effectiveness of our approach.

The main contributions of this paper are twofold. First, we propose
a novel approach for studying team variance optimization in
$n$-player stochastic games with local information observability. To
the best of our knowledge, this is the first work to address the
dual challenges posed by the variance metric and local information
constraints in stochastic games. A key strength of our approach lies
in the derivation of sensitivity optimization formulas for team
variances, which provides a promising framework for handling such
problems. Second, we develop an effective algorithm to optimize team
variance of stochastic games in a decentralized manner, and we
rigorously prove its convergence. While most existing literature on
stochastic games primarily focuses on the mathematical analysis of
equilibrium existence, algorithmic studies remain scarce despite
their importance for practical applications. Our work makes a
complementary contribution to the algorithmic results of stochastic
games, offering both theoretical insights and practical tools for
variance-aware optimization in multi-agent systems.

The remainder of this paper is organized as follows. In
Section~\ref{section_model}, we introduce the mathematical model of
team variance optimization in stochastic games. In
Section~\ref{section_result}, we present our main results, including
the sensitivity optimization formulas and the development of our
algorithm. Section~\ref{section_experiment} demonstrates the
effectiveness of our approach by using a numerical example of energy
management. Finally, we conclude this paper in
Section~\ref{section_conclusion}.

\section{Problem Formulation}\label{section_model}
We consider a discrete-time stochastic game with $\mathcal
I:=\{1,2,\dots,n\}$ players, where each player $i \in \mathcal I$
has its own finite state space $\mathcal S_i$ and finite action
space $\mathcal A_i$. The joint space of state and action of the
whole stochastic game is denoted by $\mathcal S = \times_{i=1}^{n}
\mathcal S_i$ and $\mathcal A = \times_{i=1}^{n} \mathcal A_i$,
respectively, where $\times$ indicates the Cartesian product. At
time epoch $t=0,1,\dots$, the system state is $\bm s_t=(s_{1,t},
s_{2,t}, \dots, s_{n,t})$ and the action is $\bm a_t=(a_{1,t},
a_{2,t}, \dots, a_{n,t})$, where $s_{i,t} \in \mathcal S_i$,
$a_{i,t} \in \mathcal A_i$, and $i \in \mathcal I$. We denote
$r_i(\bm s_t,\bm a_t)$ as the (random) reward received by player $i$
at time $t$. The system will transit from the current state $\bm
s_t$ to the next state $\bm s_{t+1}$ according to a transition
probability $p(\bm s_{t+1}|\bm s_t, \bm a_t)$, where action $\bm a_t
\in \mathcal A$ is adopted at time $t$ and $\bm s_t, \bm s_{t+1} \in
\mathcal S$. We have $p(\bm s_{t+1}|\bm s_t, \bm a_t) =
\prod_{i=1}^{n}p_i(s_{i,t+1}|s_{i,t}, \bm a_t)$, where
$p_i(s_{i,t+1}|s_{i,t}, \bm a_t)$ is the transition probability of
internal state of player $i$. In this paper, we consider a subclass
of stochastic games that satisfy the following assumption.

\begin{assumption}\textbf{(Separately controlled chains)}
\label{assm_1} The state $s_i$ and reward $r_i$ of each player~$i
\in \mathcal I$ is controlled only by its own action $a_i$, i.e.,
$p_i(s'_i|s_i, \bm a) = p_i(s'_i|s_i,a_i)$ and $r_i(\bm s, \bm
a)=r_i(s_i,a_i)$, where $s_i, s'_i \in \mathcal S_i$, $a_i \in
\mathcal A_i$, $\bm s \in \mathcal S$, and $\bm a \in \mathcal A$.
\end{assumption}

Assumption~\ref{assm_1} indicates that the dynamics and reward of
player $i$ are affected only by its own action, and independent of
other players' actions. This assumption has reasonable scenarios in
practice, such as the example of energy management discussed later
in Section~\ref{section_experiment}, where each microgrid (player)
only controls its own state (energy storage level and renewable
generator power) and does not affect other microgrids' state.
Moreover, in this paper we also assume that each player has local
information observable, as stated by the following assumption.

\begin{assumption}\textbf{(Local observations)}
\label{assm_2} Each player~$i \in \mathcal I$ only observes its own
information $(s_i, a_i, r_i)$, and has no access to other players'
information.
\end{assumption}

Assumption~\ref{assm_2} is also reasonable in many applications,
including the example of energy management in
Section~\ref{section_experiment}, since players are usually selfish
and have no willingness to share their own information to others.
Therefore, each player $i$ at time $t$ has its local information
$h_{i,t}$, which belongs to the history set $\mathcal H_{i,t} :=
\{(s_{i,l}, a_{i,l}, r_{i,l}) : l=0,1,\dots,t-1 \} \bigcup
\{s_{i,t}\}$. We denote the policy of player $i$ as $u_i :=
(u_{i,0}, u_{i,1}, \dots, u_{i,t},\dots)$, where its element
$u_{i,t}$ is called the decision rule at time $t$ and it is a
mapping from history information to a probability distribution on
action space, i.e., $u_{i,t} : \mathcal H_{i,t} \rightarrow \mathcal
P(\mathcal A_i)$, where $\mathcal P(\cdot)$ indicates the space of
probability distributions on a set. We call $u_i$
\emph{history-dependent randomized policy} and its policy space is
denoted by $\mathcal U^{\rm{HR}}_i$. An element of $u_{i,t}$, say
$u_{i,t}(a_i|h_{i,t})$, indicates the probability of adopting action
$a_i$ when local history information $h_{i,t}$ is observed, where
$a_i \in \mathcal A_i$ and $h_{i,t} \in \mathcal H_{i,t}$. If
$u_{i,t}(a_i|h_{i,t}) = u_{i,t}(a_i|s_{i,t})$ always holds, we call
$u_i$ \emph{Markovian randomized policy} and its policy space is
denoted by $\mathcal U^{\rm{MR}}_i$. If $u_{i,t}(\cdot|s_{i,t})$ is
a dirac distribution on $\mathcal A_i$, we call $u_i$
\emph{Markovian deterministic policy} and its policy space is
denoted by $\mathcal U^{\rm{MD}}_i$, and we may also use
$u_{i,t}(s_{i,t}) \in \mathcal A_i$ to represent the deterministic
action $a_{i,t}$. Furthermore, if $u_{i,t}(\cdot|\cdot)$ remains the
same under different $t$, we call it \emph{stationary randomized
policy} and its policy space is denoted by $\mathcal U^{\rm{SR}}_i$,
and $u_{i,t}(\cdot|\cdot)$ is degenerately denoted as
$u_{i}(\cdot|\cdot)$ for notational simplicity. If
$u_{i}(\cdot|\cdot)$ is further a dirac distribution, we call $u_i$
\emph{stationary deterministic policy} and its policy space is
denoted by $\mathcal U^{\rm{SD}}_i$.

The joint policy of the stochastic game is denoted by
$u=(u_1,u_2,\dots,u_n)$ and the joint policy space is the Cartesian
product of all players' policy spaces, such as $\mathcal U^{\rm HR}
:= \times_{i=1}^{n} \mathcal U^{\rm HR}_i$ and $\mathcal U^{\rm SD}
:= \times_{i=1}^{n} \mathcal U^{\rm SD}_i$, etc. In this paper, we
limit our attention to ergodic policies which ensure the Markov
chain associated with policy $u \in \mathcal U^{\rm HR}$ ergodic.
Since risk management is very important and variance is a popular
risk metric, we consider an optimization objective called \emph{team
variance} which measures the variance of all players' random
rewards, i.e.,
\begin{equation}\label{eq_teamvar}
J^u_{\sigma} := \mathbbm E^u \left\{ \sum_{i=1}^{n} \lim\limits_{T
\rightarrow \infty} \frac{1}{T}\sum_{t=0}^{T-1}[r_i(s_{i,t},
a_{i,t}) - \mu^u]^2 \right\},
\end{equation}
where $\mathbbm E^u\{\cdot\}$ indicates the expectation induced by
policy $u \in \mathcal U^{\rm HR}$ and $\mu^u$ is the long-run
average of all players' rewards defined as
\begin{equation}\label{eq_teammean}
\mu^u := \mathbbm E^u \left \{ \frac{1}{n}\sum_{i=1}^{n}
\lim\limits_{T \rightarrow \infty} \frac{1}{T}\sum_{t=0}^{T-1}
r_i(s_{i,t}, a_{i,t}) \right \}.
\end{equation}
Furthermore, for each player $i \in \mathcal I$, we can also define
the long-run mean and variance of this player's random rewards under
policy $u_i \in \mathcal U^{\rm HR}_i$
\begin{equation*}
\mu^{u_i}_i := \mathbbm E^{u_i} \left \{ \lim\limits_{T \rightarrow
\infty} \frac{1}{T}\sum_{t=0}^{T-1} r_i(s_{i,t}, a_{i,t}) \right \},
\end{equation*}

\begin{equation*}
J^{u_i}_{\sigma,i} := \mathbbm E^{u_i} \left \{ \lim\limits_{T
\rightarrow \infty} \frac{1}{T}\sum_{t=0}^{T-1} [r_i(s_{i,t},
a_{i,t}) - \mu^{u_i}_i]^2 \right \}.
\end{equation*}
Obviously, we have
\begin{equation*}
\mu^{u} = \frac{1}{n}\sum_{i=1}^{n}\mu^{u_i}_i.
\end{equation*}
With Assumption~\ref{assm_1} of separately controlled chains, we can
derive the following relation between the team variance of the game
and the variances of each player
\begin{equation}\label{eq_var-relation}
J^u_{\sigma} = \sum_{i=1}^n J^{u_i}_{\sigma,i} + \sum_{i=1}^n
[\mu^{u_i}_i- \mu^{u}]^2.
\end{equation}
We observe that the team variance $J^u_{\sigma}$ can be partitioned
into the squared deviations of the rewards from their respective
player's mean (within-player variance) plus the squared deviations
of the players' means from the overall mean (between-player
variance), which correspond to the first and second part of the
right-hand-side of (\ref{eq_var-relation}), respectively. The
relation in (\ref{eq_var-relation}) can be explained by the
\emph{law of total variance}, $\Var[X] = \mathbbm E[\Var[X|Y]] +
\Var[\mathbbm E[X|Y]]$. This concept is also similar to the
with-group and between-group variances which are widely used in
analysis of variance \citep{Fisher25,Sawyer09}. These two partitions
have physical or economic meanings in practice, which reflect the
stability or risk within players and the evenness or fairness
between players, respectively. Therefore, it is of significance to
optimize the team variance in stochastic games. In this paper, we
aim to find an optimal policy such that the team variance of the
stochastic game is minimized, i.e.,
\begin{flalign}\label{eq_prob0}
&\mathcal M: \hspace{6cm}  J^*_{\sigma} = \min_{u \in \mathcal
U^{\rm HR}} J^{u}_{\sigma} &
\end{flalign}
and $u^* \in \mathcal U^{\rm HR}$ is called an optimal policy if it
attains $J^{u^*}_{\sigma} = J^*_{\sigma}$.

Although Assumption~\ref{assm_1} ensures the dynamics of players
evolve separately, the objective of team variance couples the
decision-making processes of players. From the definition
(\ref{eq_teamvar}) and (\ref{eq_var-relation}), we can see that team
variance $J^u_{\sigma}$ is influenced by the value of team mean
$\mu^u$, which is determined by the collective behavior of all
players. Consequently, the decision-making processes of all players
are interconnected, and the optimization problem (\ref{eq_prob0})
constitutes a stochastic game. However, there are no existing
approaches to optimize the team variance in stochastic games, as far
as the authors are aware. The primary challenge stems from the
non-additivity and non-Markovian nature of variance metrics, which
renders the major methodology of dynamic programming inapplicable
\citep{Sobel82,Xia20}. Moreover, as stated in
Assumption~\ref{assm_2}, each player only has access to its local
information. Therefore, we have to solve the optimization problem
(\ref{eq_prob0}) in a decentralized manner. In other words, we need
to develop an optimization approach that enables players to make
decisions individually based on their local information while
coordinating to optimize the common objective $J^{u}_{\sigma}$. We
address this challenging problem by employing a new methodology
known as sensitivity-based optimization, and the details will be
introduced in the next section.

\section{Main Results}\label{section_result}
The main difficulty of the team variance optimization problem
(\ref{eq_prob0}) comes from the fact that the instantaneous variance
function defined in (\ref{eq_teamvar}) is not Markovian, since the
term $\mu^u$ in (\ref{eq_teamvar}) depends on the policy $u$ during
the whole process. Fortunately, we find that the team variance has
the following property which is useful to solve (\ref{eq_prob0}).

Without loss of generality, we consider $n$ random variables
$X_1,X_2,\dots,X_n$ and their set is denoted by
$X:=\{X_1,X_2,\dots,X_n\}$. The team variance of random variable set
$X$ is similarly defined as below.
\begin{equation}\label{eq_teamvarX}
\sigma := \sum_{i=1}^{n} \mathbbm E[(X_i - \mu)^2],
\end{equation}
where $\mu$ is the team mean of random variables $X_i$'s
\begin{equation}
\mu := \frac{1}{n}\sum_{i=1}^{n} \mathbbm E[X_i].
\end{equation}
We define \emph{pseudo team variance} of $X$ by replacing $\mu$ in
(\ref{eq_teamvarX}) with a constant $y \in \mathbb
R:=(-\infty,\infty)$, i.e.,
\begin{equation}\label{eq_pseudoteamvarX}
\sigma(y) := \sum_{i=1}^{n} \mathbbm E[(X_i - y)^2],
\end{equation}
where we call $y$ \emph{pseudo team mean}. We can derive the
following lemma about the property of pseudo team variance.
\begin{lemma}\label{lemma1}
The pseudo team variance $\sigma(y)$ is a quadratic convex function
of $y$ and its minimum is the true team variance $\sigma$ at $y^* =
\mu$.
\end{lemma}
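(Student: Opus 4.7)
The plan is to prove this by direct expansion and elementary calculus on a quadratic polynomial in the single real variable $y$. The key observation is that once we expand $(X_i-y)^2$ and pull $y$ out of the expectation, everything reduces to a one-variable minimization problem.

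First I would expand the square inside the sum to write
\begin{equation*}
\sigma(y) = \sum_{i=1}^{n}\mathbbm E[X_i^2] - 2y\sum_{i=1}^{n}\mathbbm E[X_i] + n y^2.
\end{equation*}
This display makes the structure transparent: $\sigma(y)$ is a polynomial of degree exactly two in $y$ with leading coefficient $n>0$, so it is a quadratic convex function of $y$, which gives the first assertion.

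Next I would differentiate with respect to $y$ to find the unique minimizer. Setting $\sigma'(y) = -2\sum_{i=1}^n \mathbbm E[X_i] + 2ny = 0$ yields $y^\ast = \frac{1}{n}\sum_{i=1}^n \mathbbm E[X_i]$, which by the definition of $\mu$ in the excerpt is exactly $\mu$. Since the second derivative $\sigma''(y)=2n>0$, this critical point is the unique global minimum.

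Finally I would substitute $y=\mu$ back into \eqref{eq_pseudoteamvarX} and observe that this is exactly the defining expression \eqref{eq_teamvarX} for the true team variance $\sigma$, so $\sigma(\mu)=\sigma$, completing the proof. There is really no hard step here; the only thing one has to be careful about is confirming that the minimizer $\mu$ coincides with the team mean as defined in the paper (a matter of matching formulas rather than real mathematical content), and that $n\geq 1$ so the quadratic is non-degenerate.
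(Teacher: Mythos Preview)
Your proof is correct and is essentially the same as the paper's: the paper also expands $\sigma(y)$ as $ny^2 - 2y\sum_{i=1}^n\mathbbm E[X_i] + \sum_{i=1}^n\mathbbm E[X_i^2]$, notes it is quadratically convex, and reads off the minimizer $y^*=\mu$ with minimum value $\sigma$. The only cosmetic difference is that you carry out the first- and second-derivative computation explicitly, whereas the paper simply asserts the vertex location of the parabola.
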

\begin{proof}
We can directly rewrite (\ref{eq_pseudoteamvarX}) as follows.
\begin{eqnarray}
\sigma(y) = n y^2 - 2 y \sum_{i=1}^{n} \mathbbm E[X_i] +
\sum_{i=1}^{n} \mathbbm E[X_i^2].
\end{eqnarray}
Obviously, $\sigma(y)$ is quadratically convex in $y$. Its minimum
occurs at $y^* =  \frac{1}{n}\sum_{i=1}^{n} \mathbbm E[X_i] = \mu$
and the associated minimum value is $\sigma(y^*)=\sum_{i=1}^{n}
\mathbbm E[(X_i - \mu)^2]=\sigma$.
\end{proof}
Furthermore, we can derive that $\sigma$ and $\sigma(y)$ have the
following relation
\begin{equation}\label{eq11}
\sigma(y) = \sigma + n(y - \mu)^2.
\end{equation}
When these random variables are independent each other, similar to
(\ref{eq_var-relation}), we can further derive
\begin{equation}
\sigma[X] = \sum_{i=1}^{n}\sigma[X_i] + \sum_{i=1}^{n}(\mu_i -
\mu)^2,
\end{equation}
where $\sigma[X]$ is the team variance of random variable set $X$,
$\mu_i$ and $\sigma[X_i]$ are the mean and variance of random
variable $X_i$, respectively, with slight abuse of notation.

We apply the above results to our team variance optimization problem
(\ref{eq_prob0}). In (\ref{eq_teamvar}), we replace the team mean
$\mu^u$ with a constant $y$ and derive a pseudo team variance below
\begin{equation}\label{eq_pseudoteamvar}
J^u_{\sigma}(y) := \mathbbm E^u \left\{ \sum_{i=1}^{n}
\lim\limits_{T \rightarrow \infty}
\frac{1}{T}\sum_{t=0}^{T-1}[r_i(s_{i,t}, a_{i,t}) - y]^2 \right\},
\quad y \in \mathbb R.
\end{equation}
By applying (\ref{eq11}), we have
\begin{equation}\label{eq14}
J^u_{\sigma}(y) = J^u_{\sigma} + n(y - \mu^u)^2.
\end{equation}
Further with Lemma~\ref{lemma1}, we have
\begin{equation*}
J^u_{\sigma} = \min_{y \in \mathbb R} J^u_{\sigma}(y) =
J^u_{\sigma}(y^*)\Big|_{y^* = \mu^u}.
\end{equation*}
Thus, the optimization problem (\ref{eq_prob0}) is equivalent to the
following bilevel optimization problem
\begin{equation}\label{eq_bilevelprob}
J^*_{\sigma} = \min_{y \in \mathbb R}\min_{u \in \mathcal U^{\rm
HR}} J^u_{\sigma}(y),
\end{equation}
where $\min\limits_{y \in \mathbb R}$ and $\min\limits_{u \in
\mathcal U^{\rm HR}}$ are interchangeable. The optimization
objective is converted to the pseudo team variance
$J^u_{\sigma}(y)$. From the definition (\ref{eq_pseudoteamvar}), we
can see that $[r_i(s_{i,t}, a_{i,t}) - y]^2$ is independent of any
other players since $y$ is a constant. Therefore, with
Assumption~\ref{assm_1}, we can rewrite (\ref{eq_pseudoteamvar}) as
\begin{equation}\label{eq_pseudoteamvar2}
J^u_{\sigma}(y) = \sum_{i=1}^{n} \mathbbm E^{u_i} \left\{
\lim\limits_{T \rightarrow \infty}
\frac{1}{T}\sum_{t=0}^{T-1}[r_i(s_{i,t}, a_{i,t}) - y]^2 \right\} =
\sum_{i=1}^{n} J^{u_i}_{\sigma,i}(y),
\end{equation}
where $J^{u_i}_{\sigma,i}(y)$ is the \emph{pseudo variance} of
rewards of player $i$, which is defined similarly to
(\ref{eq_pseudoteamvar})
\begin{equation}\label{eq_pseudoteamvar-i}
J^{u_i}_{\sigma,i}(y) := \mathbbm E^{u_i} \left\{ \lim\limits_{T
\rightarrow \infty} \frac{1}{T}\sum_{t=0}^{T-1}[r_i(s_{i,t},
a_{i,t}) - y]^2 \right\}, \quad y \in \mathbb R.
\end{equation}

By summarizing the above results (\ref{eq14}),
(\ref{eq_pseudoteamvar2}), and (\ref{eq_var-relation}), we directly
derive the following lemma.
\begin{lemma}\label{lemma2}
There are the following relations between the team variance and
pseudo team variance in stochastic games
\begin{equation}\label{eq18}
J^u_{\sigma} = J^u_{\sigma}(y) - n(y - \mu^u)^2, \qquad u \in
\mathcal U^{\rm{HR}}, y \in \mathbb R.
\end{equation}
With Assumption~\ref{assm_1} of separately controlled chains, we
have
\begin{equation}\label{eq19}
J^u_{\sigma} = \sum_{i=1}^n\left[J^{u_i}_{\sigma,i}(y) - (y -
\mu^u)^2 \right] = \sum_{i=1}^n\left[J^{u_i}_{\sigma,i} +
(\mu^{u_i}_i - \mu^u)^2 \right], \quad u \in \mathcal U^{\rm{HR}}, y
\in \mathbb R.
\end{equation}
\end{lemma}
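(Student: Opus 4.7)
The plan is essentially to combine the three identities already established earlier in the section and package them as a single lemma. Since Lemma~\ref{lemma2} makes two distinct assertions, I will handle them in sequence, starting with (\ref{eq18}) which holds for arbitrary history-dependent randomized policies, and then specializing to the separately controlled chain setting to obtain (\ref{eq19}).

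For (\ref{eq18}), I would simply invoke (\ref{eq14}). That identity was derived by applying the scalar relation (\ref{eq11}) (which expresses the pseudo team variance of a set of random variables as the true team variance plus a quadratic correction term) to the time-averaged reward random variables induced by policy $u$. A one-line algebraic rearrangement, subtracting $n(y-\mu^u)^2$ from both sides of (\ref{eq14}), delivers (\ref{eq18}). No further probabilistic content is needed at this step because the non-Markovian nature of the variance has already been absorbed into the derivation of (\ref{eq14}).

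For (\ref{eq19}), I would stitch together two pieces. The second equality $J^u_{\sigma} = \sum_{i=1}^n[J^{u_i}_{\sigma,i}+(\mu^{u_i}_i-\mu^u)^2]$ is nothing but (\ref{eq_var-relation}), which was previously obtained under Assumption~\ref{assm_1} by the law of total variance (the within-player/between-player decomposition). The first equality $J^u_{\sigma} = \sum_{i=1}^n[J^{u_i}_{\sigma,i}(y)-(y-\mu^u)^2]$ follows by plugging (\ref{eq_pseudoteamvar2}), which uses Assumption~\ref{assm_1} to decompose $J^u_{\sigma}(y)=\sum_{i=1}^n J^{u_i}_{\sigma,i}(y)$, into (\ref{eq18}), and then distributing the $-n(y-\mu^u)^2$ term as $-(y-\mu^u)^2$ inside each of the $n$ summands. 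This shows both expressions on the right-hand side of (\ref{eq19}) equal $J^u_{\sigma}$.

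Honestly, there is no real obstacle here: the lemma is a bookkeeping consolidation of equations (\ref{eq14}), (\ref{eq_pseudoteamvar2}), and (\ref{eq_var-relation}). The only subtle point worth flagging in the write-up is that the two representations in (\ref{eq19}) are valid for \emph{any} $y \in \mathbb R$, and in particular the first representation is useful later precisely because it disentangles the coupling across players (each summand depends only on $u_i$ except through the coupling term $(y-\mu^u)^2$), which is what motivates the bilevel reformulation of the optimization problem. I would state this observation briefly at the end of the proof to make the role of $y$ as a coordinating signal transparent.
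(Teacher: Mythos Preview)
Your proposal is correct and matches the paper's own treatment exactly: the paper states that Lemma~\ref{lemma2} is obtained ``by summarizing the above results (\ref{eq14}), (\ref{eq_pseudoteamvar2}), and (\ref{eq_var-relation})'' with no further argument, which is precisely the three-ingredient consolidation you describe. Your additional remark about the role of $y$ as a coordinating signal is a helpful gloss but not part of the paper's proof.
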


With Assumption~\ref{assm_1}, the inner level optimization problem
in (\ref{eq_bilevelprob}) can be decoupled as the sum of $n$
subproblems
\begin{flalign}\label{eq_innerprob}
&\mathcal M(y): \hspace{3.5cm}  \min_{u \in \mathcal U^{\rm HR}}
J^u_{\sigma}(y) = \sum_{i=1}^{n}\min_{u_i \in \mathcal U^{\rm HR}_i}
J^{u_i}_{\sigma,i}(y), \qquad y \in \mathbb R, &
\end{flalign}
where we rewrite the subproblem of optimizing the pseudo variance of
a single individual player $i$ as
\begin{flalign}\label{eq_subinnerprob}
&\mathcal M_i(y): \hspace{4.5cm}   \min_{u_i \in \mathcal U^{\rm
HR}_i} J^{u_i}_{\sigma,i}(y),  \qquad y \in \mathbb R, i \in
\mathcal I. &
\end{flalign}
Therefore, we can directly derive the following theorem.

\begin{theorem}\label{theorem1}
With Assumption~\ref{assm_1}, the team variance optimization problem
$\mathcal M$ in (\ref{eq_prob0}) is equivalent to a bilevel
optimization problem whose inner level problem is decoupled into a
series of pseudo variance optimization subproblems $\mathcal M_i(y)$
in (\ref{eq_subinnerprob}), i.e.,
\begin{equation}\label{eq_probeq}
\min_{u \in \mathcal U^{\rm HR}} J^{u}_{\sigma} \Longleftrightarrow
\min_{y \in \mathbb R} \sum_{i=1}^{n}\min_{u_i \in \mathcal U^{\rm
HR}_i} J^{u_i}_{\sigma,i}(y).
\end{equation}
\end{theorem}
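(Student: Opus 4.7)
The plan is to chain together the identities already established earlier in the excerpt, so that Theorem~\ref{theorem1} follows essentially by substitution and by exploiting separability. First, for any fixed policy $u \in \mathcal U^{\rm HR}$, I would invoke the game-theoretic specialization of Lemma~\ref{lemma1}: equation~(\ref{eq14}) gives $J^u_\sigma(y) = J^u_\sigma + n(y - \mu^u)^2$, a quadratically convex function of $y$ minimized at $y^\ast = \mu^u$ with minimum value $J^u_\sigma$. Hence $J^u_\sigma = \min_{y \in \mathbb R} J^u_\sigma(y)$ pointwise in $u$.

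Second, I would substitute this identity into the outer minimization and interchange the two minima:
$$\min_{u \in \mathcal U^{\rm HR}} J^u_\sigma \;=\; \min_{u \in \mathcal U^{\rm HR}} \min_{y \in \mathbb R} J^u_\sigma(y) \;=\; \min_{y \in \mathbb R} \min_{u \in \mathcal U^{\rm HR}} J^u_\sigma(y).$$
This swap is legitimate because the feasible set is a rectangle $\mathcal U^{\rm HR} \times \mathbb R$ and the objective is a single real-valued function; no topological assumption beyond the existence of minima (guaranteed by the ergodic restriction and the quadratic in $y$) is needed.

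Third, I would bring in Assumption~\ref{assm_1} through equation~(\ref{eq_pseudoteamvar2}), which asserts $J^u_\sigma(y) = \sum_{i=1}^n J^{u_i}_{\sigma,i}(y)$. The crucial observation — and the conceptual heart of the theorem — is that once the team mean $\mu^u$ has been replaced by the constant surrogate $y$, the cross-player coupling disappears: each summand $J^{u_i}_{\sigma,i}(y)$ depends only on player $i$'s own policy $u_i$, because $r_i$ and the transition of $s_i$ are controlled solely by $a_i$ under separately controlled chains. A separable sum over disjoint variables is minimized coordinatewise, so
$$\min_{u \in \mathcal U^{\rm HR}} \sum_{i=1}^{n} J^{u_i}_{\sigma,i}(y) \;=\; \sum_{i=1}^{n} \min_{u_i \in \mathcal U^{\rm HR}_i} J^{u_i}_{\sigma,i}(y).$$

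Chaining the three equalities yields~(\ref{eq_probeq}). The main obstacle is not the routine min-swap or the separability of sums, both of which are standard; rather, it is isolating where Assumption~\ref{assm_1} is used, namely in step three, where it is what permits the decomposition $J^u_\sigma(y) = \sum_i J^{u_i}_{\sigma,i}(y)$ and hence the coordinatewise minimization. The introduction of $y$ is what makes this decoupling possible — without it, the $\mu^u$ term would still couple the players even under Assumption~\ref{assm_1}.
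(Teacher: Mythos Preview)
Your proposal is correct and mirrors the paper's own derivation: the paper establishes (\ref{eq14}) from Lemma~\ref{lemma1}, interchanges the two minima as in (\ref{eq_bilevelprob}), and then invokes Assumption~\ref{assm_1} via (\ref{eq_pseudoteamvar2}) to obtain the separable form (\ref{eq_innerprob}), stating Theorem~\ref{theorem1} as a direct consequence. Your write-up is simply a more explicit and carefully annotated version of the same chain of substitutions.
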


When $y$ is fixed as a constant, we observe that the instantaneous
cost function $[r_i(s_{i,t},a_{i,t})-y]^2$ in
(\ref{eq_pseudoteamvar}) or (\ref{eq_pseudoteamvar-i}) is Markovian.
The inner optimization problem $\mathcal M(y)$ or $\mathcal M_i(y)$,
i.e., $\min\limits_{u \in \mathcal U^{\rm HR}} J^u_{\sigma}(y)$ or
$\min\limits_{u_i \in \mathcal U^{\rm HR}_i} J^{u_i}_{\sigma,i}(y)$,
is a standard MDP with long-run average criterion. Since it is well
known that a standard MDP can attain optimum in the space of
stationary deterministic policies \citep{Puterman94}, with
Lemma~\ref{lemma2} we have the following theorem.

\begin{theorem}\label{theorem2}
There always exists a stationary deterministic policy that can
achieve the minimal team variance $J^*_{\sigma}$ in
(\ref{eq_prob0}), which is also a stationary pure Nash equilibrium
policy in games.
\end{theorem}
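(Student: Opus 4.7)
The plan is to exploit the bilevel reformulation in Theorem~\ref{theorem1} together with the classical theory of standard average-reward MDPs. First I would fix an arbitrary pseudo team mean $y \in \mathbb{R}$ and analyze the inner problem $\min_{u \in \mathcal{U}^{\rm HR}} J^u_{\sigma}(y)$. By Assumption~\ref{assm_1} and equation~(\ref{eq_pseudoteamvar2}), this objective decouples completely across players, since once $y$ is fixed the instantaneous pseudo cost $[r_i(s_{i,t},a_{i,t})-y]^2$ depends only on player $i$'s own state--action pair. Hence the inner minimization reduces to the $n$ independent single-agent problems $\mathcal{M}_i(y)$, each being a standard long-run average MDP on finite state and action spaces, where the cost is Markovian in the player's own chain.

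Next I would invoke the classical result of \cite{Puterman94} that any finite standard average MDP admits an optimal stationary deterministic policy; applying it to each $\mathcal{M}_i(y)$ yields $u_i^*(y) \in \mathcal{U}^{\rm SD}_i$, and their product $u^*(y) := (u_1^*(y), \dots, u_n^*(y)) \in \mathcal{U}^{\rm SD}$ attains $\min_{u \in \mathcal{U}^{\rm HR}} J^u_{\sigma}(y)$ for every $y$. To handle the outer minimization over $y \in \mathbb{R}$, observe that by equation~(\ref{eq14}) each fixed stationary deterministic joint policy $u$ induces the upward quadratic $J^u_{\sigma}(y) = J^u_{\sigma} + n(y - \mu^u)^2$. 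Since $\mathcal{U}^{\rm SD}$ is finite, the function $\varphi(y) := \min_{u \in \mathcal{U}^{\rm SD}} J^u_{\sigma}(y)$ is a continuous, piecewise-quadratic lower envelope of finitely many coercive quadratics and therefore attains its minimum at some $y^* \in \mathbb{R}$ with an associated minimizer $u^* \in \mathcal{U}^{\rm SD}$.

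To conclude, I would invoke Lemma~\ref{lemma2} at $(u^*, y^*)$ and the chain of equalities $J^*_{\sigma} = \varphi(y^*) = J^{u^*}_{\sigma}(y^*) = J^{u^*}_{\sigma} + n(y^* - \mu^{u^*})^2 \geq J^{u^*}_{\sigma} \geq J^*_{\sigma}$, which force $y^* = \mu^{u^*}$ and $J^{u^*}_{\sigma} = J^*_{\sigma}$. Hence the stationary deterministic joint policy $u^*$ attains the minimal team variance. For the Nash equilibrium claim, I would note that since every player shares the common team-variance objective, any global minimizer is automatically a pure Nash equilibrium: if some player $i$ could unilaterally deviate from $u_i^*$ and strictly decrease $J^u_{\sigma}$, this would contradict the global optimality of $u^*$. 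The main obstacle I anticipate is cleanly justifying attainment of the outer minimum over the unbounded set $\mathbb{R}$; the finite-policy, quadratic-growth argument sketched above sidesteps this difficulty without needing any compactness assumption on $y$.
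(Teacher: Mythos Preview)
Your proposal is correct and follows essentially the same route as the paper: fix $y$, observe each $\mathcal{M}_i(y)$ is a standard finite average-cost MDP, invoke \cite{Puterman94} to get stationary deterministic minimizers, and then use Lemma~\ref{lemma2}/equation~(\ref{eq14}) to pass back to the team variance. The paper's own argument is a one-paragraph sketch that leaves the outer attainment over $y\in\mathbb{R}$ and the Nash-equilibrium claim implicit; your coercive lower-envelope argument and the observation that a common-payoff global minimizer is automatically a pure NE are exactly the details needed to make that sketch rigorous.
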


With Theorem~\ref{theorem2}, we can focus on the stationary
deterministic policy space. In the rest of the paper, all the
previous $\mathcal U^{\rm HR}$ and $\mathcal U^{\rm HR}_i$ will be
replaced with $\mathcal U^{\rm SD}$ and $\mathcal U^{\rm SD}_i$,
respectively. Thus, the bilevel optimization problem in
(\ref{eq_probeq}) can be rewritten as
\begin{equation}\label{eq_bilevelprob2}
\min_{u \in \mathcal U^{\rm HR}} J^{u}_{\sigma} \Longleftrightarrow
\min_{y \in \mathbb R} \sum_{i=1}^{n}\min_{u_i \in \mathcal U^{\rm
SD}_i} J^{u_i}_{\sigma,i}(y).
\end{equation}
To solve the above bilevel optimization problem, we need to
enumerate each possible $y$ and solve the inner MDP problems, i.e.,
we have to solve the MDP problems $\mathcal M_i(y)$ for all $i \in
\mathcal I$ and $y \in \mathbb R$, which is computationally
intractable. Therefore, it is desirable to develop more efficient
approaches to solve this problem. Below, we use the
sensitivity-based optimization theory \citep{Cao07} to study this
problem.

Performance difference formula is a key result of the
sensitivity-based optimization theory and it quantifies the
performance change of a Markov system if its policy has
perturbations. We consider a player $i \in \mathcal I$ with policy
$u_i \in \mathcal U^{\rm SD}_i$. If its policy is perturbed to a new
one $u'_i \in \mathcal U^{\rm SD}_i$, the change of the pseudo
variance under these two policies is quantified by the following
performance difference formula \citep{Cao07}
\begin{equation}\label{eq_diffi}
J^{u'_{i}}_{\sigma,i}(y) - J^{u_{i}}_{\sigma,i}(y) = \bm
\pi^{u'_i}_i[(\bm P^{u'_i}_i - \bm P^{u_i}_i)\bm g^{u_i}_i + (\bm
r^{u'_i}_i - y \bm 1)^2_{\odot} - (\bm r^{u_i}_i - y \bm
1)^2_{\odot}],
\end{equation}
where $\bm \pi^{u'_i}_i$ is a row vector representing the
steady-state distribution of player $i$ under policy $u'_i$, $\bm
P^{u'_i}_i$ and $\bm P^{u_i}_i$ are state transition probability
matrices of player $i$ under policies $u'_i$ and $u_i$,
respectively, $\bm r^{u'_i}_i$ and $\bm r^{u_i}_i$ are column
vectors representing the instantaneous cost function of player $i$
under policies $u'_i$ and $u_i$, respectively, i.e., their elements
are given by $r_i(s_i,u'_i(s_i))$ and $r_i(s_i,u_i(s_i))$,
respectively, $\bm 1$ is a column vector with elements $1$,
$(\cdot)^2_{\odot}$ represents the elementwise square of a vector,
and $\bm g^{u_i}_i$ is a column vector called the performance
potential of player $i$ under policy $u_i$. An element of $\bm
g^{u_i}_i$ is defined as below.
\begin{equation}\label{eq_bias}
g_i^{u_i}(s) := \mathbbm E \left\{ \sum_{t=0}^{\infty}
[r_i(s_{i,t},a_{i,t}) - y]^2 - J^{u_{i}}_{\sigma,i}(y)
\Big|_{s_{i,0}=s} \right\}, \quad s \in \mathcal S_i,
\end{equation}
where $a_{i,t} = u_i(s_{i,t})$ and $J^{u_{i}}_{\sigma,i}(y)$ is the
long-run average performance of player $i$ under policy $u_i$ which
is defined in (\ref{eq_pseudoteamvar-i}). Therefore, $\bm g^{u_i}_i$
quantifies the expectation of accumulated bias between every
instantaneous cost and long-run average cost, which is also called
relative value function or bias in the theory of long-run average
MDPs \citep{Puterman94}.

Furthermore, we consider the difference of team variance in
stochastic games when each player has perturbations on their own
policies, i.e., the current policies $u_i$ are changed to new
policies $u'_i$, where $u_i, u'_i \in \mathcal U^{\rm SD}_i$, $i \in
\mathcal I$. By applying the first equality in (\ref{eq19}) to
(\ref{eq_diffi}), we derive
\begin{eqnarray}
J^{u'}_{\sigma} - J^{u}_{\sigma} &=& \sum_{i=1}^{n}
[J^{u'_{i}}_{\sigma,i}(y) - J^{u_{i}}_{\sigma,i}(y) - (y-\mu^{u'})^2 + (y-\mu^{u})^2] \nonumber\\
&=& \sum_{i=1}^{n} \left\{ \bm \pi^{u'_i}_i[(\bm P^{u'_i}_i - \bm
P^{u_i}_i)\bm g^{u_i}_i + (\bm r^{u'_i}_i - y \bm 1)^2_{\odot} -
(\bm r^{u_i}_i - y \bm 1)^2_{\odot}] - (y-\mu^{u'})^2 + (y-\mu^u)^2
\right\}. \nonumber
\end{eqnarray}
By letting $y = \mu^u$, we derive the \emph{team variance difference
formula} in stochastic games under any two policies $u$ and $u'$,
where $u, u' \in \mathcal U^{\rm SD}$ and Assumption~\ref{assm_1}
holds.
\begin{center}
\begin{boxedminipage}{1\columnwidth}
\vspace{-10pt}
\begin{equation}\label{eq_diffvar}
J^{u'}_{\sigma} - J^{u}_{\sigma} = \sum_{i=1}^{n} \bm
\pi^{u'_i}_i[(\bm P^{u'_i}_i - \bm P^{u_i}_i)\bm g^{u_i}_i + (\bm
r^{u'_i}_i - \mu^u \bm 1)^2_{\odot} - (\bm r^{u_i}_i - \mu^u \bm
1)^2_{\odot}] - n(\mu^{u'}-\mu^{u})^2.
\end{equation}
\vspace{-15pt}
\end{boxedminipage}
\end{center}

The above difference formula plays a fundamental role in the team
variance optimization of stochastic games. The first part of the
right-hand-side of (\ref{eq_diffvar}) can be viewed as a
decentralized optimization problem of each player $i$, which can be
solved by standard MDP approaches. The second part
$n(\mu^{u'}-\mu^{u})^2$ is always non-negative, although we do not
know the exact value of $\mu^{u'}$ before the new policy $u'$ is
determined. Moreover, solving the optimization problem of the first
part only needs the local information of player $i$, i.e., the
values of transition probability matrices $\bm P^{u'_i}_i$ and $\bm
P^{u_i}_i$, reward functions $\bm r^{u'_i}_i$ and $\bm r^{u_i}_i$,
and the relative value function $\bm g^{u_i}_i$, which are either
given or computable parameters. The value of the team mean $\mu^u$
under the current policy $u$ is also computable or estimatable,
which can be viewed as a coordination signal. Solving an inner MDP
problem $\mathcal M_i(y)$ can ensure the difference value in
(\ref{eq_diffi}) is negative when $y=\mu^u$, thus the team variance
difference value in (\ref{eq_diffvar}) is guaranteed negative and
the team variance is reduced under the new policy $u'$. This is the
main idea of optimizing the team variance in stochastic games.

\noindent \textbf{Remark 1.} With the team variance difference
formula (\ref{eq_diffi}), we only need to solve a series of inner
MDPs $\mathcal M_i(y)$ where $y = \mu^u$ and $i \in \mathcal I$. Any
new policy $u'_i$ that improves the performance of $\mathcal M_i(y)$
can reduce the team variance of the whole stochastic game. The
optimization of each $\mathcal M_i(y)$ can be conducted separately
only based on their local information, which makes the optimization
of team variance of stochastic games solvable in a decentralized
manner. The team mean $\mu^u$ serves as a signal to coordinate the
optimization direction of each player.

With the above observations, we directly derive the following
theorem that guides the optimization of team variance
$J^u_{\sigma}$.

\begin{theorem}\label{theorem3}
If we find a new policy $u'_i \in \mathcal U^{\rm SD}_i$ that
ensures the vector $(\bm P^{u'_i}_i - \bm P^{u_i}_i)\bm g^{u_i}_i +
(\bm r^{u'_i}_i - \mu^u \bm 1)^2_{\odot} - (\bm r^{u_i}_i - \mu^u
\bm 1)^2_{\odot} \leq \bm 0$ elementwisely, then we have
$J^{u'}_{\sigma} \leq J^{u}_{\sigma}$. If the inequality holds
strictly in at least one element, then $J^{u'}_{\sigma} <
J^{u}_{\sigma}$.
\end{theorem}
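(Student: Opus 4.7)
The plan is to apply the team variance difference formula \eqref{eq_diffvar} directly and check signs term by term; no new calculations are needed beyond what the paper has already developed. To fix notation, I would denote the bracketed column vector by
\[
\bm v_i := (\bm P^{u'_i}_i - \bm P^{u_i}_i)\bm g^{u_i}_i + (\bm r^{u'_i}_i - \mu^u \bm 1)^2_{\odot} - (\bm r^{u_i}_i - \mu^u \bm 1)^2_{\odot},
\]
so that \eqref{eq_diffvar} reads $J^{u'}_{\sigma} - J^{u}_{\sigma} = \sum_{i=1}^{n} \bm \pi^{u'_i}_i \bm v_i - n(\mu^{u'}-\mu^u)^2$. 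The hypothesis of the theorem is precisely $\bm v_i \leq \bm 0$ elementwise for every $i \in \mathcal I$.

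For the first assertion, I would argue as follows. Since $\bm \pi^{u'_i}_i$ is a stationary row distribution, its entries are non-negative, so each inner product $\bm \pi^{u'_i}_i \bm v_i$ is non-positive. Summing over $i$ preserves this sign, and the trailing term $-n(\mu^{u'}-\mu^u)^2$ is manifestly non-positive as well. Combining these two observations in \eqref{eq_diffvar} yields $J^{u'}_{\sigma} - J^{u}_{\sigma} \leq 0$.

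For the strict part, suppose $v_i(s) < 0$ for some player $i$ and some state $s \in \mathcal S_i$. Because attention was restricted to ergodic policies earlier in the paper, the stationary distribution entry $\pi^{u'_i}_i(s)$ is strictly positive. The remaining components satisfy $\pi^{u'_i}_i(s')\,v_i(s') \leq 0$ for every $s' \neq s$, so
\[
\bm \pi^{u'_i}_i \bm v_i \;=\; \sum_{s' \in \mathcal S_i} \pi^{u'_i}_i(s')\,v_i(s') \;\leq\; \pi^{u'_i}_i(s)\,v_i(s) \;<\; 0.
\]
All other players contribute non-positive summands, and $-n(\mu^{u'}-\mu^u)^2 \leq 0$, so the total is strictly negative, giving $J^{u'}_{\sigma} < J^{u}_{\sigma}$.

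The only subtle point, and the one I would flag as the main obstacle, is the strict inequality: it relies on strict positivity of every entry of $\bm \pi^{u'_i}_i$. Without the ergodicity convention, a state $s$ where the hypothesis is strictly improved could be transient under $u'_i$ and thus receive zero steady-state mass, annihilating its contribution. The ergodicity assumption adopted earlier in the paper rules this out and closes the argument.
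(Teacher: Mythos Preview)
Your proposal is correct and follows exactly the approach sketched in the paper: apply the team variance difference formula \eqref{eq_diffvar}, use non-negativity of the stationary distribution entries and of $n(\mu^{u'}-\mu^u)^2$ for the non-strict part, and invoke strict positivity of $\bm\pi^{u'_i}_i$ under the ergodicity assumption for the strict part. Your write-up is in fact more detailed than the paper's one-line justification, and your explicit flag about ergodicity being essential for the strict inequality is a worthwhile observation.
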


The proof of Theorem~\ref{theorem3} is straightforward based on the
team variance difference formula (\ref{eq_diffvar}), by utilizing
the fact that each element of the steady-state distribution vector
$\bm \pi^{u'_i}_i$ is always positive in any ergodic Markov chains.
Based on (\ref{eq_diffvar}), we can further derive a \emph{necessary
condition} of optimal policy $u^*$ for the team variance
minimization problem (\ref{eq_prob0}).

\begin{theorem}\label{theorem4}
The optimal policy $u^*=(u^*_1,u^*_2,\dots,u^*_n)$ of the team
variance minimization problem (\ref{eq_prob0}) must satisfy $\bm
P^{u'_i}_i \bm g^{u^*_i}_i + (\bm r^{u'_i}_i - \mu^{u^*} \bm
1)^2_{\odot} \geq \bm P^{u^*_i}_i \bm g^{u^*_i}_i + (\bm r^{u^*_i}_i
- \mu^{u^*} \bm 1)^2_{\odot}$ elementwisely, for any $u'_i \in
\mathcal U^{\rm SD}_i$ and $i \in \mathcal I$.
\end{theorem}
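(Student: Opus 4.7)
The plan is to prove Theorem~\ref{theorem4} by contradiction, using the team variance difference formula (\ref{eq_diffvar}) together with the single-state deviation trick that is standard in average-reward MDP theory. Concretely, suppose the necessary condition fails. Then there exist a player $i \in \mathcal I$, a policy $u'_i \in \mathcal U^{\rm SD}_i$, and a state $s \in \mathcal S_i$ such that the $s$-component satisfies
\begin{equation*}
\bigl[\bm P^{u'_i}_i \bm g^{u^*_i}_i + (\bm r^{u'_i}_i - \mu^{u^*} \bm 1)^2_{\odot}\bigr](s) < \bigl[\bm P^{u^*_i}_i \bm g^{u^*_i}_i + (\bm r^{u^*_i}_i - \mu^{u^*} \bm 1)^2_{\odot}\bigr](s).
\end{equation*}
The aim is to produce a policy $u'$ with strictly smaller team variance, contradicting the optimality of $u^*$.

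First, I would construct the perturbed joint policy $u' = (\tilde u_i, u^*_{-i})$, where $\tilde u_i$ agrees with $u^*_i$ at every state in $\mathcal S_i \setminus \{s\}$ and takes the action $u'_i(s)$ at state $s$, while every other player $j \neq i$ keeps $u^*_j$. Then by construction, the bracketed vector
\begin{equation*}
(\bm P^{\tilde u_i}_i - \bm P^{u^*_i}_i)\bm g^{u^*_i}_i + (\bm r^{\tilde u_i}_i - \mu^{u^*} \bm 1)^2_{\odot} - (\bm r^{u^*_i}_i - \mu^{u^*} \bm 1)^2_{\odot}
\end{equation*}
is the zero vector at every coordinate except $s$, and strictly negative at $s$ by the hypothesized violation. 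For every other player $j \neq i$, the analogous bracket vanishes identically because $\tilde u_j = u^*_j$.

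Next, I would apply the team variance difference formula (\ref{eq_diffvar}) with $u = u^*$ and $u' = (\tilde u_i, u^*_{-i})$. Since we restrict attention to ergodic policies, the steady-state distribution $\bm \pi^{\tilde u_i}_i$ of player $i$ is strictly positive at every state, and in particular $\pi^{\tilde u_i}_i(s) > 0$. Consequently, the entire first summation on the right-hand side of (\ref{eq_diffvar}) reduces to $\pi^{\tilde u_i}_i(s)$ times a strictly negative number, hence is strictly negative. The remaining term $-n(\mu^{u'} - \mu^{u^*})^2$ is always non-positive. Adding these two contributions yields $J^{u'}_{\sigma} - J^{u^*}_{\sigma} < 0$, contradicting the assumption that $u^*$ attains $J^*_{\sigma}$.

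The only mildly delicate step is justifying that the bracket truly vanishes at states other than $s$: this needs the observation that the $s'$-th row of $\bm P^{\tilde u_i}_i$ and the $s'$-th entry of $\bm r^{\tilde u_i}_i$ depend only on $\tilde u_i(s')$, which equals $u^*_i(s')$ for $s' \neq s$ by the construction of $\tilde u_i$. Once this is spelled out, the rest is a direct application of Theorem~\ref{theorem3} in its strict form. I do not anticipate a substantial obstacle; the argument is essentially a one-step policy-improvement/contradiction that the difference formula was designed to enable.
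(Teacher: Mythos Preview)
Your proposal is correct and follows exactly the approach the paper prescribes: the paper explicitly states that Theorem~\ref{theorem4} is proved ``by using the contradiction method based on the team variance difference formula (\ref{eq_diffvar})'' and omits the details. Your single-state deviation construction, combined with ergodicity and the non-positivity of $-n(\mu^{u'}-\mu^{u^*})^2$, is precisely the standard way to carry this out.
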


Theorem~\ref{theorem4} can be proved by using the contradiction
method based on the team variance difference formula
(\ref{eq_diffvar}). For simplicity, we omit the detailed proof.

Based on the equivalence of bilevel optimization problem
(\ref{eq_bilevelprob2}) and Theorem~\ref{theorem3}, we can develop
an iterative procedure to repeatedly reduce the team variance of
stochastic games. For the current policy $u$, we can estimate its
team mean $\mu^u$. Letting $y=\mu^u$, we solve a series of
subproblems $\mathcal M_i(y)$ and derive a set of new policies $u'$
which ensures $J^{u'}_{\sigma} < J^{u}_{\sigma}$ by using
Theorem~\ref{theorem3}. Furthermore, we update the team mean under
the new policy $u'$ and let $y=\mu^{u'}$, repeat the above procedure
and obtain a new policy $u''$ which ensures $J^{u''}_{\sigma} <
J^{u'}_{\sigma}$. Thus, the team variance is repeatedly reduced and
approaches the optimum $J^*_{\sigma}$. The above optimization
procedure is formally stated by the following algorithm.

\begin{center}
\begin{boxedminipage}{1\columnwidth}
\vspace{5pt}

\noindent\textbf{ Initialization}
\begin{itemize}
\item{For each player $i \in \mathcal I$, arbitrarily choose an initial policy $u^{(0)}_i$ from the policy space $\mathcal U^{\rm SD}_i$, and set $l=0$.}
\end{itemize}

\noindent\textbf{ Policy Evaluation}
\begin{itemize}
\item{For the current policy $u^{(l)}=(u^{(l)}_1,u^{(l)}_2,\dots,u^{(l)}_n)$, numerically compute or estimate $\mu^{u^{(l)}}$ and $\bm g^{u_i^{(l)}}_i$
based on their definitions (\ref{eq_teammean}) and (\ref{eq_bias}),
respectively, $i \in \mathcal I$.}
\end{itemize}

\noindent\textbf{ Policy Improvement}
\begin{itemize}
\item{For each player $i \in \mathcal I$, separately update its policy based on
its local information and the coordination signal $\mu^{u^{(l)}}$ as
follows.
\begin{equation}\label{eq_policyimprov}
u^{(l+1)}_i(s) = \argmin\limits_{a \in \mathcal A_i}\left\{
(r_i(s,a) - \mu^{u^{(l)}})^2 + \sum_{s' \in \mathcal
S_i}p_i(s'|s,a)g^{u_i^{(l)}}_{i}(s') \right\},  \quad s \in \mathcal
S_i.
\end{equation}
When there exist multiple actions attaining the above minimum
operation, choose $u_i^{(l+1)}(s)=u_i^{(l)}(s)$ if possible to avoid
oscillations.}
\end{itemize}

\noindent\textbf{ Stopping Rule}
\begin{itemize}
\item{If $u^{(l+1)} = u^{(l)}$, stop; Otherwise, let $l \leftarrow l+1$ and go to step 2.}
\end{itemize}

\vspace{3pt}
\end{boxedminipage}

\textbf{Algorithm 1.} A decentralized algorithm to reduce the team
variance of stochastic games.
\end{center}

Based on the previous analysis results, we can prove that
Algorithm~1 will converge to a strictly local minimum or a
first-order stationary point in the mixed policy space, which is
stated by the following theorem.

\begin{theorem}
Algorithm~1 will converge to a strictly local minimum or a
first-order stationary point in the space of mixed policies after a
finite number of iterations.
\end{theorem}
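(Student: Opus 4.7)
The plan is three-fold: establish finite termination by strictly monotone descent on the finite set $\mathcal U^{\rm SD}$; identify the terminal policy $u^*$ as a fixed point of the greedy update, hence a solution of the Bellman optimality equation for each inner MDP $\mathcal M_i(\mu^{u^*})$; and finally upgrade this fixed-point property to first-order stationarity (or strict local minimality) in the mixed policy simplex via Lemma~\ref{lemma2}.

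First, since every $\mathcal S_i$ and $\mathcal A_i$ is finite, the deterministic policy space $\mathcal U^{\rm SD}$ is a finite set. The greedy rule (\ref{eq_policyimprov}) ensures that for each $i$ and $s$, the value $(r_i(s,a)-\mu^{u^{(l)}})^2 + \sum_{s'} p_i(s'|s,a)\, g^{u^{(l)}_i}_i(s')$ at $a=u^{(l+1)}_i(s)$ does not exceed its value at $a=u^{(l)}_i(s)$. Written componentwise, this is exactly the elementwise inequality required by Theorem~\ref{theorem3} with $u=u^{(l)}$, $u'=u^{(l+1)}$; moreover, if $u^{(l+1)}\neq u^{(l)}$, the tie-breaking rule forces the inequality to be strict in at least one coordinate, so Theorem~\ref{theorem3} gives $J^{u^{(l+1)}}_\sigma < J^{u^{(l)}}_\sigma$. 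A strictly decreasing sequence of real values indexed by a finite set must terminate, so after finitely many iterations the algorithm reaches some $u^*\in\mathcal U^{\rm SD}$ with $u^{(l+1)}=u^{(l)}=u^*$.

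The stopping condition means $u^*_i(s)\in\argmin_{a\in\mathcal A_i}\{(r_i(s,a)-\mu^{u^*})^2 + \sum_{s'}p_i(s'|s,a)g^{u^*_i}_i(s')\}$ for every $i$ and $s$, which is precisely the Bellman optimality equation for the long-run average MDP $\mathcal M_i(\mu^{u^*})$. By the standard MDP result that stationary deterministic policies attain the long-run average optimum, $u^*_i$ remains optimal for $\mathcal M_i(\mu^{u^*})$ even over the full mixed policy space $\mathcal U^{\rm SR}_i$. Applying Lemma~\ref{lemma2} with $y=\mu^{u^*}$, for any mixed policy $u$ one has
\begin{equation*}
J^u_\sigma - J^{u^*}_\sigma = \sum_{i=1}^n\bigl[J^{u_i}_{\sigma,i}(\mu^{u^*}) - J^{u^*_i}_{\sigma,i}(\mu^{u^*})\bigr] - n(\mu^u - \mu^{u^*})^2.
\end{equation*}
The first sum is non-negative by the optimality of each $u^*_i$; its directional derivative at $u^*$ is governed by the pointwise Bellman gap and is non-negative in every feasible simplex direction. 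The second term is of order $O(\|u-u^*\|^2)$ because $\mu^u$ depends smoothly and linearly, to first order, on the mixed-policy parameters. Hence every feasible first-order directional derivative of $J^u_\sigma$ at $u^*$ is non-negative, i.e.\ $u^*$ is a first-order stationary point on the product of simplices $\mathcal U^{\rm SR}$. If moreover the argmin in (\ref{eq_policyimprov}) is uniquely attained for every $(i,s)$, the Bellman gap is bounded below by a positive multiple of the perturbation magnitude, so in a neighborhood the first-order non-negative term strictly dominates the second-order quadratic $n(\mu^u-\mu^{u^*})^2$, yielding $J^u_\sigma > J^{u^*}_\sigma$ and strict local minimality.

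The main obstacle is the last step: carefully extending the difference formula (\ref{eq_diffvar}), which is stated between deterministic policies, to infinitesimal mixed-policy perturbations around the deterministic vertex $u^*$, and rigorously justifying the respective first- and second-order scalings of the two terms. This relies on a standard sensitivity analysis of the ergodic stationary distribution $\bm\pi^{u_i}_i$ and of the player means $\mu^{u_i}_i$ as smooth functions of the mixed-policy parameters, a step that is routine but uses the global ergodicity assumption maintained throughout the paper.
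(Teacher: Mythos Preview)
Your proof is correct, and the finite-termination half is essentially identical to the paper's. The second half, however, proceeds differently. The paper derives an explicit \emph{team variance derivative formula}
\[
\frac{\partial J^{\delta^{u'}_{u}}_{\sigma}}{\partial \delta}\Big|_{\delta=0}
= \sum_{i=1}^{n} \bm\pi^{u_i}_i\bigl[(\bm P^{u'_i}_i - \bm P^{u_i}_i)\bm g^{u_i}_i + (\bm r^{u'_i}_i - \mu^u \bm 1)^2_{\odot} - (\bm r^{u_i}_i - \mu^u \bm 1)^2_{\odot}\bigr]
\]
by differentiating the difference formula (\ref{eq_diffvar}) along the one-parameter family $\delta^{u'}_{u}$, then reads off nonnegativity of the directional derivative at $u^*$ directly from the stopping condition (\ref{eq29}), partitioning directions $u'$ according to whether that derivative is zero or strictly positive. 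You instead invoke Lemma~\ref{lemma2} at $y=\mu^{u^*}$ to write
\[
J^u_\sigma - J^{u^*}_\sigma = \sum_i\bigl[J^{u_i}_{\sigma,i}(\mu^{u^*}) - J^{u^*_i}_{\sigma,i}(\mu^{u^*})\bigr] - n(\mu^u-\mu^{u^*})^2,
\]
recognize the stopping condition as the fixed-point condition of policy iteration for each inner MDP $\mathcal M_i(\mu^{u^*})$, and conclude that the bracketed sum is globally nonnegative over \emph{all} mixed policies---from which first-order stationarity follows immediately, and strict local minimality follows under uniqueness of the argmin by a first-order versus second-order comparison. Your route is slightly more conceptual: it exploits global optimality of $u^*_i$ for $\mathcal M_i(\mu^{u^*})$ and avoids computing the derivative formula explicitly; the paper's route yields that derivative formula as a by-product, which has independent interest as a sensitivity result. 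Your dichotomy (unique argmin $\Rightarrow$ strict local minimum; otherwise first-order stationary) is exactly the paper's partition $\tilde{\mathcal U}^{\rm SD}_{u^*}$ versus its complement, phrased differently. The one place to tighten is the linear lower bound on the Bellman-gap sum under uniqueness: it follows from the performance difference formula (\ref{eq_diffi}) with $\bm\pi^{u_i}_i\to\bm\pi^{u^*_i}_i>0$ and a strictly positive minimum gap, but you should state this explicitly rather than leave it as ``routine.''
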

\begin{proof}
First, we prove the convergence of Algorithm~1 within a finite
number of iterations. From (\ref{eq_policyimprov}), we can see that
$u_i^{(l+1)}$ and $u_i^{(l)}$ must satisfy the strict inequality in
Theorem~\ref{theorem3} since $u_i^{(l+1)} \neq u_i^{(l)}$ holds for
at least one player $i$. Therefore, we can derive that
$J^{u^{(l+1)}}_{\sigma} < J^{u^{(l)}}_{\sigma}$ and the team
variance is strictly reduced after each iteration of Algorithm~1.
Since the policy space $\mathcal U^{\rm SD}$ is finite, Algorithm~1
will surely stop after a finite number of iterations.

Second, we study the property of the convergence point. We define a
mixed policy $\delta^{u'}_{u}$ that perturbs from the current policy
$u$ to a new policy $u'$ with a mixing probability $\delta$, where
$u,u' \in \mathcal U^{\rm SD}$ and $0\leq \delta \leq 1$. In other
words, if the system adopts the mixed policy $\delta^{u'}_{u}$, each
player~$i \in \mathcal I$ will adopt policy $u_i$ with probability
$1-\delta$ and adopt policy $u'_i$ with probability $\delta$.
Replacing $\delta^{u'}_{u}$ with $u'$ in (\ref{eq_diffvar}) and
taking the differentiating operation through $\delta \rightarrow 0$,
we can derive the \emph{team variance derivative formula} as
follows.
\begin{equation}\label{eq_derivvar}
\frac{\partial J^{\delta^{u'}_{u}}_{\sigma}}{\partial \delta}
\Big|_{\delta=0} = \sum_{i=1}^{n} \bm \pi^{u_i}_i[(\bm P^{u'_i}_i -
\bm P^{u_i}_i)\bm g^{u_i}_i + (\bm r^{u'_i}_i - \mu^u \bm
1)^2_{\odot} - (\bm r^{u_i}_i - \mu^u \bm 1)^2_{\odot}].
\end{equation}
The above derivative can also be denoted as $\nabla J^{u}_{\sigma}$,
which is the gradient of the team variance $J^{u}_{\sigma}$ with
respect to the mixing probability $\delta$ along with a mixing
direction $u \rightarrow u'$ in the policy space $\mathcal U^{\rm
SD}$. When Algorithm~1 stops at a policy, say $u^*$, it indicates
that (\ref{eq_policyimprov}) cannot be improved anymore, i.e.,
\begin{equation}\label{eq29}
\bm P^{u^*_i}_i \bm g^{u^*_i}_i + (\bm r^{u^*_i}_i - \mu^{u^*} \bm
1)^2_{\odot} \leq \bm P^{u'_i}_i \bm g^{u^*_i}_i + (\bm r^{u'_i}_i -
\mu^{u^*} \bm 1)^2_{\odot}, \quad \forall u'_i \in \mathcal U^{\rm
SD}_i, i \in \mathcal I.
\end{equation}
This indicates that the converged point $u^*$ satisfies the
necessary condition of optimal policies, as stated in
Theorem~\ref{theorem4}. Replacing $u$ with $u^*$ in
(\ref{eq_derivvar}) and substituting (\ref{eq29}) into
(\ref{eq_derivvar}), we derive
\begin{equation}\label{eq30}
\frac{\partial J^{\delta^{u'}_{u^*}}_{\sigma}}{\partial \delta}
\Big|_{\delta=0} = \sum_{i=1}^{n} \bm \pi^{u^*_i}_i[(\bm P^{u'_i}_i
- \bm P^{u^*_i}_i)\bm g^{u^*_i}_i + (\bm r^{u'_i}_i - \mu^{u^*} \bm
1)^2_{\odot} - (\bm r^{u^*_i}_i - \mu^{u^*} \bm 1)^2_{\odot}] \geq
0, \quad \forall u' \in \mathcal U^{\rm SD}.
\end{equation}
For more rigorously discussing the sign of the above derivative, we
further partition the policy space $\mathcal U^{\rm SD}$ into two
parts: $\tilde{\mathcal U}^{\rm SD}_{u^*}$ and $\mathcal U^{\rm
SD}\backslash\tilde{\mathcal U}^{\rm SD}_{u^*}$, where
$\tilde{\mathcal U}^{\rm SD}_{u^*}$ is defined as
\begin{equation}\label{eq31}
\tilde{\mathcal U}^{\rm SD}_{u^*} := \{u' :  \frac{\partial
J^{\delta^{u'}_{u^*}}_{\sigma}}{\partial \delta} \Big|_{\delta=0} =
0\}.
\end{equation}
Thus, with (\ref{eq30}) we have
\begin{equation}
\mathcal U^{\rm SD}\backslash\tilde{\mathcal U}^{\rm SD}_{u^*} :=
\{u' :  \frac{\partial J^{\delta^{u'}_{u^*}}_{\sigma}}{\partial
\delta} \Big|_{\delta=0} > 0\}.
\end{equation}
Therefore, we can see that the converged point $u^*$ is a
first-order stationary point in the mixed policy space spanned by
$u' \in \tilde{\mathcal U}^{\rm SD}_{u^*}$ and a strictly local
minimum in the mixed policy space spanned by $u' \in \mathcal U^{\rm
SD}\backslash\tilde{\mathcal U}^{\rm SD}_{u^*}$. Thus, the theorem
is proved.
\end{proof}

We can see that Algorithm~1 is a decentralized optimization
algorithm and it can be implemented by each player independently
based on their own local information. The decentralized optimization
procedure is coordinated by a signal of team mean $\mu^u$. The
algorithm will strictly reduce the whole team variance
$J^u_{\sigma}$ after every player's iteration. The converged policy
is usually a strictly local minimum in the mixed policy space since
the condition in (\ref{eq31}) is hardly satisfied and the set
$\tilde{\mathcal U}^{\rm SD}_{u^*}$ is usually null in most
scenarios.

\section{Numerical Experiment}\label{section_experiment}
In this section, we use the energy management problem in a multiple
microgrid system to demonstrate the effectiveness of our approach,
where Assumptions~\ref{assm_1} and \ref{assm_2} naturally hold and
the team variance is a suitable metric to measure the stability and
fairness of microgrids.

\begin{figure}[hbtp]
\centering
\includegraphics[width=.75\textwidth]{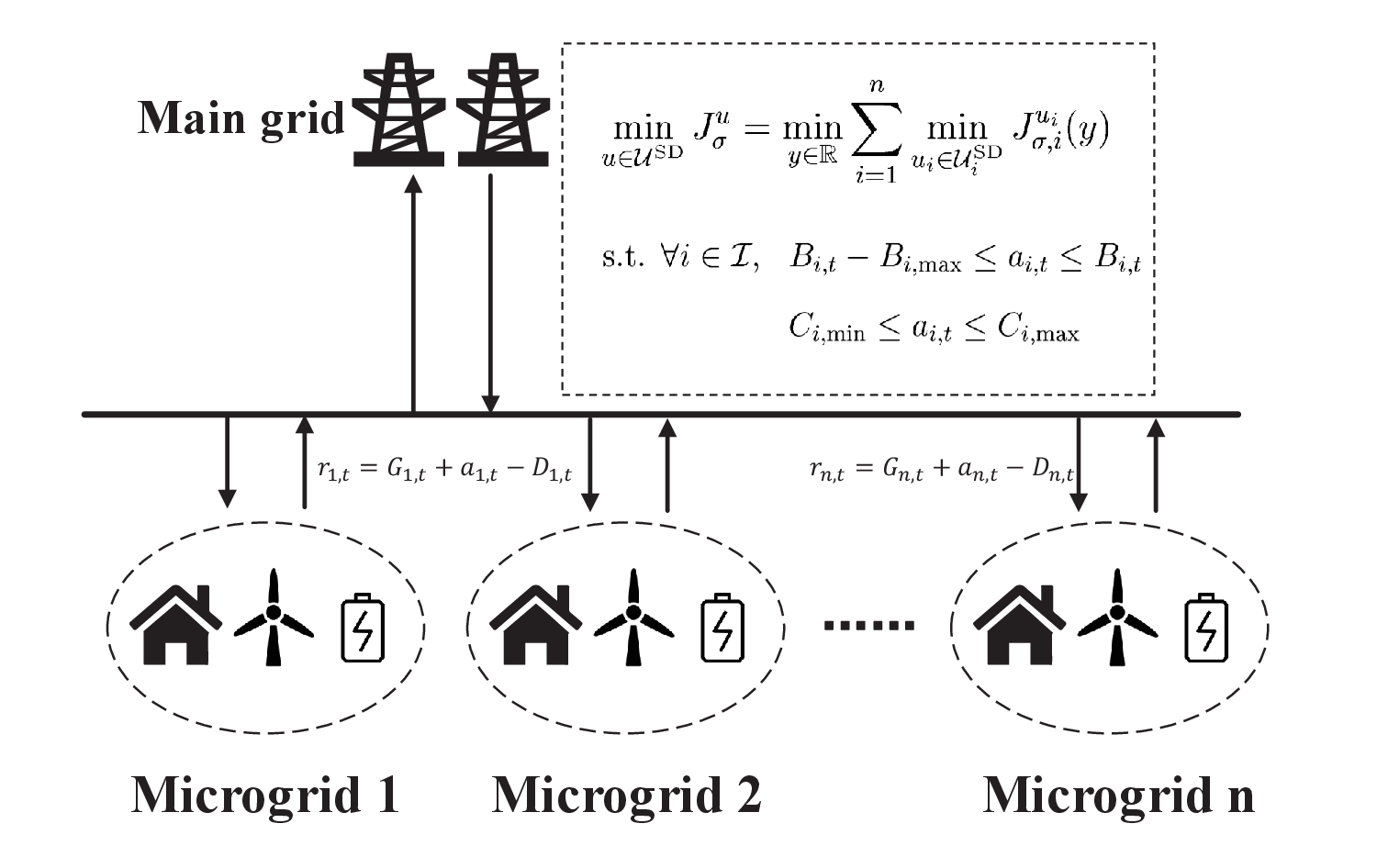}
\caption{Illustration of the team variance optimization in a
multiple microgrid system.} \label{fig_microgrid}
\end{figure}

As illustrated in Figure~\ref{fig_microgrid}, we consider an energy
management system including $n$ microgrids connected with the main
grid. Each microgrid $i \in \mathcal I$ is equipped with wind
turbines to generate electricity $G_{i,t}$ and batteries to store
energy $a_{i,t}$. The decision variable $a_{i,t}$ is defined as
follows: when $a_{i,t} > 0$, it represents the discharging power of
the batteries during periods of low wind power availability; when
$a_{i,t} < 0$, it represents the charging power of the batteries
when there is an abundance of wind power. The decision variable
$a_{i,t}$ has to satisfy the constraints: $C_{i,\min} \leq a_{i,t}
\leq C_{i,\max}$ where $C_{i,\max}$ and $-C_{i,\min}$ represent the
maximum discharging and charging power of the batteries,
respectively; $B_{i,t} - B_{i,\max} \leq a_{i,t} \leq B_{i,t}$ where
$B_{i,t}$ is the energy level of the batteries at time $t$ and
$B_{i,\max}$ is the maximum capacity of the batteries. The state of
microgrid $i$ is defined as $s_{i,t}:=(G_{i,t}, B_{i,t})$. The
dynamic of $G_{i,t}$ is a stochastic process determined by the wind
power distribution, which is modeled by a Markov chain with
probability matrix $\bm P_g$ calculated by statistics. The dynamic
of $B_{i,t}$ is determined by action $a_{i,t}$ as
$B_{i,t+1}=B_{i,t}-a_{i,t}$. Each microgrid $i$ has its own
electricity demand $D_{i,t}$, which is known prior by demand
prediction. The exchanged power between microgrid $i$ and the main
grid is denoted by $r_{i,t}=G_{i,t}+a_{i,t}-D_{i,t}$: $r_{i,t}>0$
represents selling energy to the main grid; $r_{i,t}<0$ represents
buying energy from the main grid. The control policy
$u=(u_1,u_2,\dots,u_n)$ is stationary and deterministic and its
element $u_i(s_{i})$ determines the discharging power $a_{i}$, $i
\in \mathcal I$. As we know, the power supply and demand of grid
must be equal at every time. Thus, the fluctuation of the power
exchanged between the main grid and microgrids affects the stability
and safety of the power system, which can be measured by the
variance of $r_{i,t}$. On the other hand, we are also concerned
about the fairness among these $n$ microgrids, which indicates that
the deviation of the mean values of $r_{i,t}$'s of $n$ microgrids
should be minimized. Thus, we aim to minimize the team variance
$J^{u}_{\sigma}$ which reflects our concerns on both stability and
fairness of multiple microgrids.

We verify Assumptions~\ref{assm_1} and \ref{assm_2} in this
application example. In the state variable
$s_{i,t}=(G_{i,t},B_{i,t})$, the wind power $G_{i,t}$ is determined
by weather which is not affected by the discharging power
$a_{i,t}$'s of microgrids, the battery energy level $B_{i,t}$ is
determined only by its own action $a_{i,t}$, not by any other
microgrids' actions. Therefore, we can see that $p_i(s'_i|s_i, \bm
a) = p_i(s'_i|s_i, a_i)$ and $r_i(\bm s,\bm a)=r_i(s_i,a_i)$, and
Assumption~\ref{assm_1} of separately controlled chains holds
naturally in this example. It is reasonable that each microgrid
knows only the local information about its own wind power, battery
energy level, and discharging power. Other microgrids have no
willingness to share their own information because of privacy. Thus,
Assumption~\ref{assm_2} about local observations is also reasonable
in this example. Therefore, we can apply our main results to study
this energy management problem and minimize the team variance by
using Algorithm~1 in a decentralized manner.


The parameter settings of this experiment are listed in
Tables~\ref{table1} and \ref{table2} as below.

\begin{table}[htbp]
    \centering
    \caption{Parameter settings of microgrids.}\label{table1}
    \begin{tabular}{ccc}
        \hline
        \toprule
        \# of microgrids $n$ & capacity of batteries $B_{i,\max}$/MWh \\ \hline
        3 & 5 \\ \hline\hline
        maximum charging power $C_{i,\min}$/MW & maximum discharging power $C_{i,\max}$/MW \\ \hline
        $-2$ & $+2$ \\ \hline\hline
    \end{tabular}
\end{table}

\begin{table}[htbp]
    \centering
    \caption{Discretized states and actions of microgrids.}\label{table2}
    \begin{tabular}{ccccccc}
        \hline
        \toprule
        State $G_i$ (wind power/MW) & 0 & 1 & 2 & 3 & 4 & 5 \\ \hline
        State $B_i$ (storage energy level/MWh) & 0 & 1 & 2 & 3 & 4 & 5 \\ \hline
        Action $a_i$ (battery (dis)charing power/MW) & $-2$ & $-1$ & 0 & $1$ & $2$  \\ \hline
    \end{tabular}
\end{table}


We assume that the wind power of these microgrids is independently
distributed, which is reasonable if they are located far away. The
transition probability matrix $\bm P_{g,1}$ of wind power states of
microgrid~1 is approximately calculated based on the real data from
the Measurement and Instrumentation Data Center (MIDC) in the
National Renewable Energy Laboratory (NREL, online), which maintains
a data set of wind speed at an experiment field since 1996. The
transition probability matrices $\bm P_{g,2}$ and $\bm P_{g,3}$ of
microgrid~2 and 3 are slightly manipulated based on $\bm P_{g,1}$,
which reflect a relatively strong and weak wind profile,
respectively. For simplicity, we assume that the electricity demand
of each microgrid is a constant, and we set $D_{1,t} = 2, D_{2,t} =
2.5, D_{3,t} = 2$ for the diversity of different microgrids. In this
example, we allow wind abandonment to ensure that the power selling
of any microgrid to the main grid is not larger than 2.

\begin{equation*}
\bm{P}_{g,1}= \left(
\begin{matrix}
0.53 & 0.18 & 0.19 & 0.04 & 0.01 & 0.05\\
0.51 & 0.08 & 0.20 & 0.08 & 0.02 & 0.11\\
0.35 & 0.11 & 0.19 & 0.11 & 0.03 & 0.21\\
0.27 & 0.15 & 0.15 & 0.14 & 0.03 & 0.26\\
0.14 & 0.11 & 0.13 & 0.15 & 0.05 & 0.42\\
0.09 & 0.03 & 0.06 & 0.06 & 0.03 & 0.73
\end{matrix}
\right),
\end{equation*}
\begin{equation*}\bm{P}_{g,2}= \left(
\begin{matrix}
0.33 & 0.18 & 0.19 & 0.04 & 0.01 & 0.25\\
0.31 & 0.08 & 0.20 & 0.08 & 0.02 & 0.31\\
0.15 & 0.11 & 0.19 & 0.11 & 0.03 & 0.41\\
0.17 & 0.15 & 0.15 & 0.14 & 0.03 & 0.36\\
0.04 & 0.11 & 0.13 & 0.15 & 0.05 & 0.52\\
0.07 & 0.03 & 0.06 & 0.06 & 0.03 & 0.75\\
\end{matrix}
\right), \quad \bm{P}_{g,3}= \left(
\begin{matrix}
0.53 & 0.18 & 0.19 & 0.04 & 0.01 & 0.05\\
0.51 & 0.08 & 0.20 & 0.08 & 0.02 & 0.11\\
0.45 & 0.11 & 0.19 & 0.11 & 0.03 & 0.11\\
0.37 & 0.15 & 0.15 & 0.14 & 0.03 & 0.16\\
0.34 & 0.11 & 0.13 & 0.15 & 0.05 & 0.22\\
0.49 & 0.03 & 0.06 & 0.06 & 0.03 & 0.33\\
\end{matrix}
\right).
\end{equation*}

We apply Algorithm~1 to this example of energy management. The
initial policy is randomly generated and each microgrid optimizes
their own pseudo variance based on their own local information and a
coordinated signal of the current team mean $\mu^{u^{(l)}}$. The
optimization procedure of Algorithm~1 is illustrated by
Figure~\ref{fig_2}, where the results of microgrid~2 are given as
representation. Figure~\ref{fig_2} indicates that Algorithm~1 can
converge after 6 iterations and the team variance is optimized from
$J^{u^{(0)}}_\sigma=10.1243$ to $J^{u^{(6)}}_\sigma=4.3440$. From
Figure~\ref{fig_2}(a), we can see that the team variance is
monotonically reduced and the team mean has small fluctuations
during the optimization procedure. From Figure~\ref{fig_2}(b), we
can see that the pseudo variance of microgrid~2 is monotonically
reduced, under the coordination of signal $\mu^u$. However, the
variance of microgrid~2 is not monotonically reduced: the variance
at the 3rd iteration is larger than that at the 2nd iteration. From
this experiment, we validate that Algorithm~1 can effectively reduce
the team variance of stochastic games in a decentralized manner.
Moreover, the optimization usually converges fast and has large
improvements during the first few iterations, which resemble the
performance behavior of classic policy iteration algorithms in MDPs.

\begin{figure}[htbp]
    \subfigure[Convergence of team variance and mean.]{
        \begin{minipage}[b]{0.5\textwidth}
            \includegraphics[width=1\textwidth]{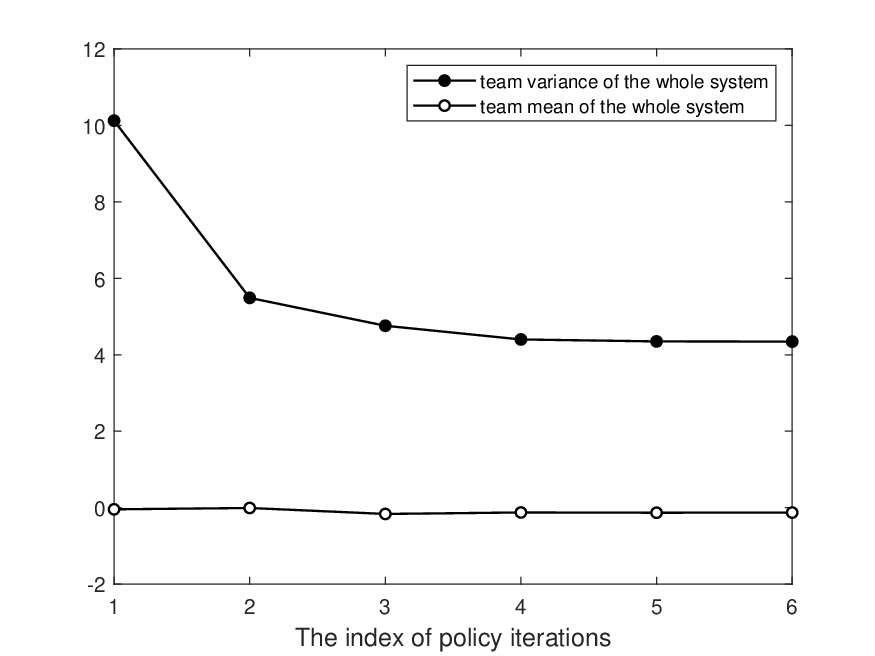}
        \end{minipage}
    }
    \subfigure[Convergence of values in microgrid~2.]{
        \begin{minipage}[b]{0.5\textwidth}
            \includegraphics[width=1\textwidth]{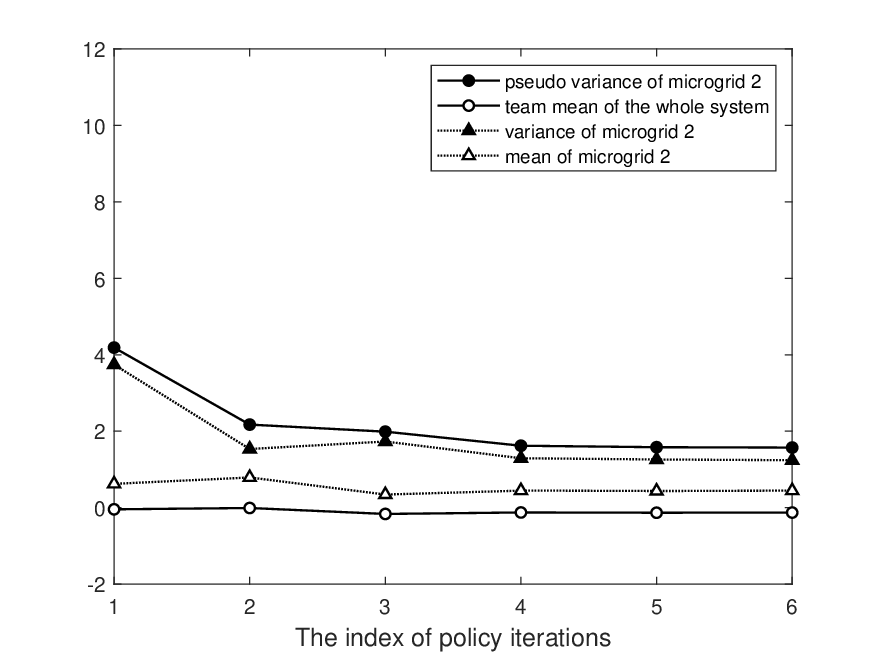}
        \end{minipage}
    }
    \caption{The policy iteration procedures of Algorithm~1. } \label{fig_2}
\end{figure}

\section{Conclusion} \label{section_conclusion}
In this paper, we investigate team variance optimization in
$n$-player stochastic games with separately controlled chains. By
leveraging sensitivity-based optimization, we address the challenges
posed by the non-additivity and non-Markovian nature of the variance
metric, as well as the constraint of local information
observability. We derive two key sensitivity optimization formulas:
the difference formula and the derivative formula for team variance,
which provide a solid theoretical foundation for decentralized
variance optimization. Furthermore, we establish the existence of a
stationary pure Nash equilibrium policy. Building on these results,
we propose a bilevel optimization algorithm that effectively
optimizes team variance in a decentralized manner, where the team
mean at the outer level serves as a coordinating signal for the
inner-level optimization of the $n$ players. We prove that the
proposed algorithm converges to either a strictly local minimum or a
first-order stationary point in the space of mixed policies,
demonstrating its theoretical soundness and practical utility.

It is a promising future direction to explore team variance-aware
multi-agent reinforcement learning in a data-driven manner. With the
abundance of data in real-world applications, such an approach could
significantly enhance the feasibility and scalability of
optimization algorithms. Leveraging deep learning techniques to
approximate value functions and agent policies could be particularly
important in this context. Additionally, extending the team variance
optimization framework to other domains, such as fairness
optimization in queueing systems or transportation networks, offers
interesting opportunities for future work. These scenarios,
characterized by the presence of multiple decision-makers, highlight
the broad applicability and relevance of our proposed framework.

\section*{Funding Declaration}
This work was supported in part by the National Natural Science
Foundation of China (72342006).



\begin{thebibliography}{99}


%
%
%
%
%
%
%
%
%


\bibitem[B{\"a}uerle and Ott(2011)]{Bauerle2011} B{\"a}uerle, N. and Ott, J. (2011). Markov decision processes with average-value-at-risk criteria. \emph{Mathematical Methods of Operations Research} 74(3), 361-379.

\bibitem[Bellemare et al.(2023)]{Bellemare23} Bellemare, M. G., Dabney, W., Rowland, M. (2023). \emph{Distributional Reinforcement Learning}. MIT Press.

\bibitem[Bisi et al.(2020)]{Bisi20} Bisi, L., Sabbioni, L., Vittori, E., Papini, M., Restelli, M. (2020). Risk-averse trust region optimization for reward-volatility reduction. \emph{Proceedings of the 29th International Joint Conference on Artificial Intelligence (IJCAI'20)}, 4583-4589.

\bibitem[Blancas-Rivera and Jasso-Fuentes(2024)]{Blancas-Rivera24} Blancas-Rivera, R. and Jasso-Fuentes, H. (2024). Discrete-time hybrid control with risk-sensitive discounted costs. \emph{Discrete Event Dynamic Systems} 34, 659-687.

\bibitem[Borkar(2024)]{Borkar24} Borkar, V. S. (2024). Risk-sensitive control, single controller games and linear programming. \emph{Journal of Dynamics and Games} 11(2), 150-159.

\bibitem[Borkar and Meyn(2002)]{Borkar02} Borkar, V. S. and Meyn, S. P. (2002). Risk-sensitive optimal control for Markov decision processes with monotone cost. \emph{Mathematics of Operations Research} 27(1), 192-209.

\bibitem[Cao(2007)]{Cao07} Cao, X. R. (2007). \emph{Stochastic Learning and Optimization -- A Sensitivity-Based Approach}. New York: Springer.

\bibitem[Cavazos-Cadena et al.(2023)]{Cavazos-Cadena23} Cavazos-Cadena, R., Cruz-Su\'{a}rez, H., Montes-De-Oca, R. (2023). Average criteria in denumerable semi-Markov decision chains under risk-aversion. \emph{Discrete Event Dynamic Systems} 33(3), 221-256.

\bibitem[Chung(1994)]{Chung94} Chung, K. J. (1994). Mean-variance tradeoffs in an undiscounted MDP: The unichain case. \emph{Operations Research} 42, 184-188.

\bibitem[Escobedo-Trujillo et al.(2024)]{Escobedo-Trujillo24} Escobedo-Trujillo, B. A., L\'{o}pez-Barrientos, J. D., Colorado-Garrido, D., Alaffita-Hern\'{a}ndez, F. A. (2024). Variance optimality in constrained and unconstrained stochastic differential games. \emph{Results in Control and Optimization} 14, 100354.

\bibitem[Etesami(2024)]{Etesami24} Etesami, S. R. (2024). Learning stationary Nash equilibrium policies in $n$-player stochastic games with independent chains. \emph{SIAM Journal on Control and Optimization} 62(2), 799-825.

\bibitem[Etesami et al.(2018)]{Etesami18} Etesami, S. R., Saad, W., Mandayam, N. B., Poor, H. V. (2018). Stochastic games for the smart grid energy management with prospect prosumers. \emph{IEEE Transactions on Automatic Control} 63 (2018), 2327-2342.

\bibitem[Filar et al.(1989)]{Filar89} Filar, J. A., Kallenberg, L. C. M., Lee, H. M. (1989). Variance-penalized Markov decision processes. \emph{Mathematics of Operations Research} 14(1), 147-161.

\bibitem[Fisher(1925)]{Fisher25} Fisher, R. A. (1925). \emph{Statistical Methods for Research Workers}. Oliver \& Boyd, Edinburgh.

\bibitem[Guo and Song(2009)]{Guo09b} Guo, X. and Song, X. Y. (2009). Mean-variance criteria for finite continuous-time Markov decision processes. \emph{IEEE Transactions on Automatic Control} 54, 2151-2157.

\bibitem[Hern\'{a}ndez-Lerma et al.(1999)]{Hernandez99} Hern\'{a}ndez-Lerma, O., Vega-Amaya, O., and Carrasco, G. (1999). Sample-path optimality and variance-minimization of average cost Markov control processes. \emph{SIAM Journal on Control and Optimization} 38, 79-93.

\bibitem[Howard and Matheson(1972)]{Howard1972} Howard, R. A. and Matheson, J. E. (1972). Risk-sensitive Markov decision processes. \emph{Management Science} 18(7), 356-369.

\bibitem[Huang(2018)]{Huang18} Huang, Y. (2018). Finite horizon continuous-time Markov decision processes with mean and variance criteria. \emph{Discrete Event Dynamic Systems: Theory and Applications} 28(4), 539-564.

\bibitem[Li and Ng(2000)]{Li00} Li, D. and Ng, W. L. (2000). Optimal dynamic portfolio selection: Multiperiod mean-variance formulation. \emph{Mathematical Finance} 10(3), 387-406.

\bibitem[Li and Young(2022)]{Li22} Li, D. and Young, V. R. (2022). Stackelberg differential game for reinsurance: Mean-variance framework and random horizon. \emph{Insurance: Mathematics and Economics} 102, 42-55.

\bibitem[Li et~al.(2022)]{Li2022} Li, X., Zhong, H., Brandeau, M. L. (2022). Quantile Markov decision processes. \emph{Operations Research} 70(3), 1428-1447.

\bibitem[Parilina and Akimochkin(2021)]{Parilina21} Parilina, E. and Akimochkin, S. (2021). Cooperative stochastic games with mean-variance preferences. \emph{Mathematics} 9, 230.

\bibitem[Prashanth and Ghavamzadeh(2013)]{Prashanth13} Prashanth, L. A. and Ghavamzadeh, M. (2013). Actor-critic algorithms for risk-sensitive MDPs. \emph{Advances in Neural Information Processing Systems (NIPS'2013)}, 252-260.

\bibitem[Puterman(1994)]{Puterman94} Puterman, M. L. (1994). \emph{Markov Decision Processes: Discrete Stochastic Dynamic Programming}. New York: John Wiley \& Sons.

\bibitem[Sawyer(2009)]{Sawyer09} Sawyer, S. F. (2009). Analysis of variance: The fundamental concepts. \emph{Journal of Manual \& Manipulative Therapy} 17(2), 27E-38E.

\bibitem[Sobel(1982)]{Sobel82} Sobel, M. J. (1982). The variance of discounted Markov decision processes. \emph{Journal of Applied Probability} 19(4), 794-802.

\bibitem[Sobel(1994)]{Sobel94} Sobel, M. J. (1994). Mean-variance tradeoffs in an undiscounted MDP. \emph{Operations Research} 42, 175-183.

\bibitem[Tamar et al.(2012)]{Tamar12} Tamar, A., Castro, D. D., and Mannor, S. (2012). Policy gradients with variance related risk criteria. \emph{Proceedings of the 29th International Conference on Machine Learning (ICML'2012)}, Edinburgh, Scotland.

\bibitem[Wen et al.(2024)]{Wen24} Wen, X., Guo, X., Xia, L. (2024). Finite horizon partially observable semi-Markov decision processes under risk probability criteria. \emph{Operations Research Letters} 57, 107187.


\bibitem[White(1992)]{White92} White, D. J. (1992). Computational approaches to variance-penalized Markov decision processes. \emph{OR-Spektrum} 14, 79-83.

\bibitem[White(1993)]{White93} White, D. J. (1993). Minimizing a threshold probability in discounted Markov decision processes. \emph{J. Math. Anal. Appl.} 173(2), 634-646.

\bibitem[Wu and Zhang(2024)]{Wu24} Wu, Y. and Zhang, J. (2024). The relationships between discounted and average criteria of stochastic games with prospect theory. \emph{Journal of Dynamics and Games} 11(3), 249-264.

\bibitem[Xia(2018)]{Xia18a} Xia, L. (2018). Mean-variance optimization of discrete time discounted Markov decision processes. \emph{Automatica} 88, 76-82.

\bibitem[Xia(2020)]{Xia20} Xia, L. (2020). Risk-sensitive Markov decision processes with combined metrics of mean and variance. \emph{Production and Operations Management} 29(12), 2808-2827.

\bibitem[Xia et~al.(2023)]{Xia2023} Xia, L., Zhang, L., Glynn P. W. (2023). Risk-sensitive Markov decision processes with long-run cvar criterion. \emph{Production and Operations Management} 32(12), 4049-4067.

\bibitem[Yan et al.(2024)]{Yan24} Yan, Y., Li, G., Chen, Y., Fan, J. (2024). Model-based reinforcement learning for offline zero-sum Markov games. \emph{Management Science} 72(6), 2430-2445.


\bibitem[Yongacoglu et al.(2021)]{Yongacoglu21} Yongacoglu, B., Arslan, G., and Y\"{u}ksel, S. (2021). Decentralized learning for optimality in stochastic dynamic teams and games with local control and global state information. \emph{IEEE Transactions on Automatic Control} 67(10), 5230-5245.

\bibitem[Zhang et al.(2021)]{Zhang21} Zhang, K., Yang, Z., Ba\c{s}ar, T. (2021). Multi-agent reinforcement learning: A selective overview of theories and algorithms. Vamvoudakis KG, Wan Y, Lewis FL, Cansever D, eds. \emph{Handbook of Reinforcement Learning and Control} (Springer, Cham), 321-384.

\bibitem[Zhou and Li(2000)]{Zhou00} Zhou, X. Y. and Li, D. (2000). Continuous-time mean-variance portfolio selection: A stochastic LQ framework. \emph{Applied Mathematics \& Optimization} 42, 19-33.

\bibitem[Zhou and Yin(2004)]{Zhou04} Zhou, X. Y. and Yin, G. (2004). Markowitz's mean-variance portfolio selection with regime switching: A continuous-time model. \emph{SIAM Journal on Control and Optimization} 42, 1466-1482.




\end{thebibliography}
\end{document}